\newcommand{\p}{\partial}
\newcommand{\pbar}{\ov{\partial}}
\newcommand{\abs}[1]{\left\lvert#1\right\rvert}
\newcommand{\ov}[1]{\overline{#1}}
\newcommand{\ul}[1]{\underline{#1}}
\newcommand{\ti}[1]{\widetilde{#1}}
\newcommand{\vp}{\varphi}
\newcommand{\e}{\varepsilon}
\newcommand{\mSH}{m\mathrm{SH}}
\newcommand{\Hm}{\mathrm{H}^m}
\newcommand{\MA}{\mathrm{MA}}
\newcommand{\Supp}{\mathrm{Supp}}
\newcommand{\R}{\mathbb{R}}
\newcommand{\C}{\mathbb{C}}
\renewcommand{\leq}{\leqslant}
\renewcommand{\geq}{\geqslant}
\newcommand{\be}{\begin{equation}}
\newcommand{\ee}{\end{equation}}
\newcommand{\PSH}{\textrm{PSH}}
\newcommand{\loc}{\textrm{loc}}
\begin{document}
\newtheorem{claim}{Claim}
\newtheorem{theorem}{Theorem}[section]
\newtheorem{lemma}[theorem]{Lemma}
\newtheorem{corollary}[theorem]{Corollary}
\newtheorem{proposition}[theorem]{Proposition}
\newtheorem{conjecture}[theorem]{Conjecture}
\newtheorem{example}[theorem]{Example}
\newtheorem{question}{Question}[section]
\theoremstyle{definition}
\newtheorem{definition}[theorem]{Definition}
\newtheorem{remark}[theorem]{Remark}

\numberwithin{equation}{section}

\newenvironment{spm}
    {\left( \begin{smallmatrix}
    }
    { 
     \end{smallmatrix} \right)
    }
    
\newenvironment{sbm}
    {\left[ \begin{smallmatrix}
    }
    { 
     \end{smallmatrix} \right]
    }

\title{Asymptotics for some Singular Monge-Amp\`{e}re Equations}

\begin{abstract}
Given a psh function $\vp\in\mathcal{E}(\Omega)$ and a smooth, bounded $\theta\geq 0$, it is known that one can solve the Monge-Amp\`{e}re equation $\MA(\vp_\theta)=\theta^n\MA(\vp)$, with some form of Dirichlet boundary values, by work of \AA hag--Cegrell--Czy\.{z}--Hi\d{\^{e}}p. Under some natural conditions, we show that $\vp_\theta$ is comparable to $\theta\vp$ on much of $\Omega$; especially, it is bounded on the interior of $\{\theta = 0\}$. Our results also apply to complex Hessian equations, and can be used to produce interesting Green's functions.
\end{abstract}

\author[N. McCleerey]{Nicholas McCleerey}
\address[N. McCleerey]{Dept of Mathematics, Purdue University, West Lafayette, Indiana 47907-2067, USA}
\email{nmccleer@purdue.edu}

\subjclass[2020]{Primary: 32U05; Secondary: 35J60.}

\maketitle


\section{Introduction}\label{Introduction}


Suppose that $(\Omega^n, \omega)$ is a strongly $m$-pseudoconvex ($1\leq m\leq n$) K\"ahler manifold with (smooth, non-empty) boundary; we write $\mSH(\Omega)$ for the space of $m$-subharmonic ($m$-sh) functions on $\Omega$. We adopt the convention $0\cdot \infty = 0$. When $m = n$, then $\mSH(\Omega) = \PSH(\Omega)$, the space of psh function.

For $p\leq 0$, define the Riesz kernel $K_p: [0, \infty)\rightarrow \R\cup\{-\infty\}$ and its inverse, $L_p: [-\infty, 0)\rightarrow [0,\infty)$ to be:
\[
K_p(s) := \begin{cases} \log s &\text{ if }p = 0\\ \frac{1}{p} s^p &\text{ otherwise} \end{cases}\quad\text{ and }\quad L_p(t) := \begin{cases} e^t &\text{ if }p = 0\\ (p t)^{1/p} &\text{ otherwise.} \end{cases}
\]
Note that $L_p(t)$ is convex and increasing.

Our main result is the following (see Section 2 for notation):

\begin{theorem}\label{Theorem 1}
Suppose that $\psi\in \mathcal{E}_m(\Omega)$, the Cegrell class on $\Omega$, is such that $\psi\leq -1$ and $\Hm(\psi)$ is supported on $S_\psi := \{\psi = -\infty\}$; moreover, assume that $S_\psi$ is closed and that $\psi$ is smooth on $\Omega\setminus S_\psi$.

Additionally, suppose there exists $p \leq 0 < \delta$ such that $h := L_p(\psi)$ satisfies
\begin{equation}\label{delta-regular}
i\p\pbar h \geq_m \e h^{1-\delta}\omega\text{ for some $\e > 0$.}
\end{equation}
Then for any $\theta\in C^2(\Omega)$, $0\leq \theta\leq 1$, the envelope:
\[
\psi_\theta := \sup\{ \vp\in\mSH(\Omega)\ |\ \vp\leq \min\{0, \theta\psi + C\}\text{ for some } C\geq 0\}^*,
\]
satisfies:
\begin{enumerate}[(i)]
\item $\Hm(\psi_\theta) = \chi_{S_\psi}\theta^m\Hm(\psi)$,
\item $\psi_\theta \geq \psi - C$,
\item $\psi_\theta \in L^\infty_\loc(\Omega\setminus \ov{\{\theta > 0\}})$, and
\item $\displaystyle \lim_{\substack{x\rightarrow z \\ x\not\in S_\psi}} \frac{\psi_\theta}{\psi} = \theta$ for all $z\in S_\psi\cap \{\theta > 0\}$.
\end{enumerate}
Moreover, if $p > -\frac{\delta}{2}$, then $\psi_\theta$ and $\theta\psi$ have the same singularity type.

\end{theorem}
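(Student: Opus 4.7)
Items (i)--(iii) are the preliminaries. For (ii), note that $\psi - C$ lies in the envelope family for $C$ large: $(1-\theta)\psi \leq 0$ gives $\psi - C \leq \theta\psi - C$, and $\psi - C \leq 0$ for $C \geq 1$, so $\psi - C$ is a valid competitor. Part (i) combines the cited Cegrell-class Monge--Amp\`{e}re theory of \AA hag--Cegrell--Czy\.{z}--Hi\d{\^{e}}p with a comparison argument: the upper bound $\Hm(\psi_\theta) \leq \chi_{S_\psi}\theta^m\Hm(\psi)$ uses the envelope structure, while the reverse inequality needs barriers realizing $\theta\psi$ on $S_\psi$, provided by the construction of the next paragraph. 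Given (i), part (iii) is immediate: $\Hm(\psi_\theta) = 0$ off $\overline{\{\theta > 0\}}$, so $\psi_\theta$ is $m$-harmonic there and locally bounded (using $\psi_\theta \leq 0$).

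\textbf{Barriers for (iv).} The inequality $\limsup\psi_\theta/\psi \leq 1$ at $z \in S_\psi$ follows from (ii). For the matching $\liminf \geq \theta$, I would produce, for each $z \in S_\psi \cap \{\theta > 0\}$ and $\theta_0 < \theta(z)$, a local $m$-sh function $u$ near $z$ with $u \sim \theta_0\psi$, then glue to a global competitor via $\max\{u, \psi - C\}$ (using $\theta_0 < \theta$ near $z$ so that $u \leq \theta\psi + O(1)$ locally). The local $u$ is built from $h$: since $\theta$ is nearly constant $\theta_0$ near $z$, one modifies $\theta_0\psi$ (which \emph{is} $m$-sh as $\theta_0$ is a constant) by a bounded function of $h$, and the hypothesis \eqref{delta-regular}, a strong positive lower bound on $i\p\pbar h$, is precisely what absorbs the error coming from replacing $\theta_0$ by the nearby $\theta$. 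Letting $\theta_0 \uparrow \theta(z)$ then completes (iv).

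\textbf{Moreover: the main obstacle.} Equality of singularity types means $|\psi_\theta - \theta\psi| \leq C$ globally, upgrading the $o(|\psi|)$ asymptotic of (iv) to $O(1)$. The new content is a single \emph{global} $m$-sh competitor $v$ with $v \geq \theta\psi - C$ on all of $\Omega$; the upper bound $\psi_\theta \leq \theta\psi + C$ I expect from a separate comparison argument via (i) together with the boundary behavior of $v$. Such a $v$ combines a function behaving like $\theta\psi$ with a bounded correction $f$ built from $h$. Computing its Hessian, the non-constancy of $\theta$ introduces two negative contributions: an $\omega$-type loss of size $|\psi|\omega \sim h^p\omega$ from $\psi\, i\p\pbar\theta$, and a rank-one loss of size $h^{2p-2}i\p h \wedge \pbar h$ from the cross term $2\Re(i\p\theta \wedge \pbar\psi)$. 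The hypothesis \eqref{delta-regular} supplies a single positive reserve proportional to $f'(h) h^{1-\delta}\omega$, which must be split via Cauchy--Schwarz to cover both losses. Balancing yields a coupled inequality of the schematic form $f'(h) f''(h) \gtrsim h^{2p+\delta-3}$, forcing $f'(h) \gtrsim h^{p+\delta/2-1}$; boundedness of $f = \int f'$ then requires exactly $p > -\delta/2$. The chief difficulty I foresee is crafting the correct ansatz for $v$---the naive $v = \theta\psi + f(h)$ confronts a sign conflict between $f$ being integrable (wanting $f'$ large near $h = 0$) and $f''(h) \geq 0$ being needed to turn the rank-one positivity to account---so the viable construction likely requires auxiliary multiples of $\psi$, a mollified $\theta$, or a reformulation via the $L_p$-transform, and executing this balance uniformly (especially near the transition $\partial\{\theta > 0\}$) is the delicate technical step.
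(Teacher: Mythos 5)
Your outline is partially aligned with the paper, but there are several substantive gaps, and the key mechanism the paper uses to close the hard direction is missing from your proposal.

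First, your claim that (iii) is ``immediate'' once (i) is known is false. An $m$-sh function with vanishing $m$-Hessian measure need not be locally bounded; $\log|z|$ on a planar domain punctured at the origin is harmonic with vanishing Laplacian there, but unbounded. The paper establishes (iii) via an explicit global subsolution of the form $\theta K_p(h) + \sum_j a_j\theta_j K_{q_j}(h) + C\rho$ (Proposition \ref{subsolution}), where the $\theta_j$ are nested cutoffs whose supports can be shrunk toward $\ov{\{\theta>0\}}$; since $q_\ell>0$, the surviving term $a_\ell K_{q_\ell}(h)+C\rho$ is bounded outside those supports, which yields (iii). You need a barrier argument here, not just a vanishing-measure argument. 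Relatedly, your treatment of (i) merely defers to the cited work, but in the paper the upper bound $\Hm(\psi_\theta)\leq\chi_{S_\psi}\theta^m\Hm(\psi)$ is derived from the asymptotic estimates via Proposition \ref{polar comparison}, and the reverse inequality from the subsolution; it is not a black-box citation.

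Second, and more seriously, your argument for (iv) only produces the lower asymptotic $\liminf_{x\to z}\psi_\theta/\psi \geq \theta(z)$. You assert the upper bound follows from (ii), but (ii) only gives $\limsup \psi_\theta/\psi\leq 1$, which is useless when $\theta(z)<1$. Establishing $\limsup\psi_\theta/\psi\leq\theta(z)$ is precisely the hard direction, and it is where the paper's genuine innovation sits: the supersolution $\ov{\psi}_\theta := \theta K_p(h) - aK_{p+\gamma}(h) - C\rho$ from Proposition \ref{supersolution} is \emph{not} $m$-subharmonic. The paper sidesteps the sign conflict you identified in your ``Moreover'' paragraph by working with the notion of a superweight (Definition \ref{superweight def}), which only requires $\Delta\ov{\psi}_\theta\leq 0$ for a single fixed $m$-elliptic linear operator $\Delta$ built from the subsolution, rather than full $m$-concavity. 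The comparison $\psi_\theta\leq\ov{\psi}_\theta$ then comes from the Rashkovskii--Sigurdsson-type Corollary \ref{RS_Cor}, proved via a three-circles argument (Proposition \ref{three circles}) for superweights. Without this linearized framework you cannot resolve the tension you correctly diagnosed between wanting a bounded correction ($p>-\delta/2$) and wanting a subsolution-compatible convexity; your proposal identifies the obstacle but leaves it open, and the naive ansätze you suggest (auxiliary multiples of $\psi$, mollified $\theta$, $L_p$-transform) do not obviously lead to the resolution.
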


Our main contribution is the construction of sub- and supersolutions to deduce the asymptotics in $(iii)$ and $(iv)$, which are new even in the psh case. In \cite{ACCH09, HP17}, envelopes were constructed which satisfy $(i)$ and $(ii)$ -- in fact, their construction works more generally, for any $\psi\in\mathcal{E}_m(\Omega)$ and any $\theta \in L^\infty(\Omega, \Hm(\psi))$. It should be the case that their envelope agrees with $\psi_\theta$ when the boundary values do, as a result of the general comparison theorem of \AA hag-Czy\.{z}-Lu-Rashkovskii \cite{ACLR24b}, although we did not check this thoroughly.

The relevance of Theorem \ref{Theorem 1} stems from the following: a widely held expectation in pluripotential theory is that there should not exist a psh function $\psi\in \mathcal{E}(\Omega)$ such that $\MA(\psi) = \delta_V$, the Hausdorff measure of a complex submanifold $V\subset\Omega$ (with non-zero dimension). If one could establish boundedness of the original envelopes in \cite{ACCH09} away from the support of $\theta$, then the expectation would follow from a straightforward contradiction argument. Theorem \ref{Theorem 1} provides exactly this, but under some additional assumptions; as such, it seems better to regard it simply as further evidence towards the expectation.

Let us then briefly discuss the necessity of our assumptions. Weakening the smoothness of $\psi$ in Theorem \ref{Theorem 1} to local boundedness should be quite doable -- we have not completely checked the details however, and hope to update this in the future. Going further than this would require a new method or finer computations; the same is true for the closedness of $S_\psi$.

The assumption that $\Hm(\psi)$ be supported on $S_\psi$ can be weakened substantially -- in fact, we do not even need to assume that $\psi$ is $m$-sh, only that $L_p(\psi)$ is. The specific assumptions and results are given in Theorem \ref{Real Theorem 1}.

Some form of assumption \eqref{delta-regular} seems indispensable however; given any $\psi\in\mSH(\Omega)$ (e.g. $\psi = \log\abs{z_1}$) one can check that for $h := L_p(\vp + \rho)$, $p\leq 0$, we have
\[
i\p\pbar h \geq_m \e h^{1-p} \omega
\]
for $\e > 0$ sufficiently small. Thus, Theorem \ref{Theorem 1} (really, Theorem \ref{Real Theorem 1}) is false without assuming something to replace \eqref{delta-regular}.

To understand \eqref{delta-regular}, we offer the following heuristic argument. Suppose that $p=0$ and $m=n$. Then
\[
i\p\pbar e^\psi = e^\psi i\p\pbar \psi + e^\psi i\p\psi \wedge \pbar\psi \geq \e e^{(1-\delta)\psi}\omega.
\]
Since $\MA(\psi) = 0$ on $\{\psi > -\infty\}$, $i\p\pbar\psi$ cannot have full rank -- if we weaken this condition, as in Theorem \ref{Real Theorem 1}, then the example above tells us we should refine this to saying that $i\p\pbar\psi$ cannot have full rank ``asymptotically." Assumption \eqref{delta-regular} ensures however that it has rank $n-1$ asymptotically, since the gradient term has rank 1. Varying $p$ changes the overall scaling on the right-hand side, so that larger $p$ corresponds to more asymptotic positivity; similarly with $\delta$. We thus view assumption \eqref{delta-regular} as a kind of ``maximal rank" condition.

\medskip

As $m$ decreases, it becomes easier to sastisfy condition \eqref{delta-regular}, since $i\p\pbar\psi$ needs less positivity to dominate $\omega$ in the $m$-sh sense (it only needs $m$-many positive eigenvalues). This, plus the additionally flexibility contained in Theorem \ref{Real Theorem 1}, allows us to show:
\begin{corollary}\label{Corollary 1}
Suppose that $V\subset \Omega$ is a complex submanifold, with $\p V =\emptyset$ or $\p V\subset \p\Omega$, and complex codimension $k \geq m$. Set $h := d_{\omega, V}^2$, the square of the Riemannian distance function to $V$ induced by $\omega$, and $p := 1 - \frac{k}{m}\leq 0$. Then for any $\theta\in C^2(\ov{\Omega})$, $\theta\geq 0$, the envelope:
\[
\psi_\theta := \sup\{ \vp\in\mSH(\Omega)\ |\ \vp\leq \min\{0, \theta K_p(h) + C\}\text{ for some } C\geq 0\}^*
\]
satisfies conclusions $(i)$-$(iv)$ in Theorem \ref{Real Theorem 1}. Moreover, if $k < \frac{3}{2}m$, then $\psi_\theta$ and $\theta K_p(h)$ have the same singularity type. 
%
%
\end{corollary}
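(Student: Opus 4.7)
The plan is to apply Theorem~\ref{Real Theorem 1} with $\psi := K_p(h)$, so that $L_p(\psi) = h$ plays the role of the function in \eqref{delta-regular}; the key computation is to establish \eqref{delta-regular} with $\delta = 1$. The remaining hypotheses---smoothness of $\psi$ off $V = S_\psi$, closedness of $V$, and $\Hm(\psi)$ supported on $V$---will follow from direct local computations.

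First I would work in holomorphic normal coordinates $(z', z'')$ around a point of $V$, writing $V = \{z' = 0\}$ locally with $z' \in \C^k$, in which $\omega$ agrees with the Euclidean form to first order and $h(z) = |z'|^2 + O(|z|^3)$. Thus
\[
i\p\pbar h = i\sum_{j=1}^k dz'_j \wedge d\bar z'_j + O(|z|)\,\omega
\]
as $(1,1)$-forms. The leading rank-$k$ form has eigenvalues $(1,\ldots,1,0,\ldots,0)$ relative to $\omega$ with $k$ ones; since $k \geq m$, these lie strictly inside the G\aa rding cone $\Gamma_m$, so that $\sigma_j > 0$ for all $j \leq m$ after subtracting $\e\omega$ with $\e > 0$ small. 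The $O(|z|) = O(h^{1/2})$ error is absorbed for $|z|$ small, giving $i\p\pbar h \geq_m \e\,\omega$ on some tubular neighborhood $U$ of $V$, which is exactly \eqref{delta-regular} with $\delta = 1$. Outside $U$, $h$ is bounded below and $\psi$ is bounded, so the condition can be arranged either trivially or via a cutoff of $h$ without affecting the envelope's behavior near $V$.

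For the support condition on $\Hm(\psi)$, a short computation in the same normal coordinates shows that the eigenvalues of $i\p\pbar\psi$ at a point with $h > 0$ are $h^{p-1}$ (multiplicity $k-1$, in normal directions orthogonal to $z'$), $p\,h^{p-1}$ (multiplicity $1$, the radial normal direction), and $0$ (multiplicity $n-k$, tangent to $V$). The identity
\[
\binom{k-1}{m} + p\binom{k-1}{m-1} = \binom{k-1}{m-1}\left[\frac{k-m}{m} + p\right] = 0
\]
holds precisely when $p = 1 - k/m$, forcing $\sigma_m$ of the above eigenvalues to vanish and hence $\Hm(\psi) = 0$ on $\Omega\setminus V$. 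When $p = 0$ (i.e., $k = m$) one uses $\log h$ directly and the radial eigenvalue degenerates to $0$, leaving only $k-1 = m-1$ non-zero eigenvalues so that $\sigma_m$ vanishes trivially. Lower $\sigma_j$, $j < m$, are strictly positive, confirming that $\psi$ is genuinely $m$-sh near $V$.

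Theorem~\ref{Real Theorem 1} then yields (i)--(iv), and the moreover statement ``$\psi_\theta$ and $\theta\psi$ have the same singularity type when $p > -\delta/2$'' translates, via $\delta = 1$ and $p = 1 - k/m$, to $k < \tfrac{3}{2} m$. The step I expect to be most delicate is the globalization of \eqref{delta-regular} outside the tubular neighborhood, where $h$ need not be smooth or $m$-sh; a careful cutoff argument (or a comparison with the envelope of the cutoff function) should suffice, since all of the conclusions of Theorem~\ref{Real Theorem 1} are local near $S_\psi = V$ and $\psi$ is bounded away from $V$.
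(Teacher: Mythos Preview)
There is a genuine gap. Although you cite Theorem~\ref{Real Theorem 1}, the hypotheses you actually try to verify---smoothness of $\psi$ off $S_\psi$, closedness of $S_\psi$, and $\Hm(\psi)$ supported on $S_\psi$---are those of Theorem~\ref{Theorem 1}. The last of these fails: your eigenvalue computation is only leading order. In the normal coordinates you set up, $i\p\pbar h$ equals the rank-$k$ Euclidean form \emph{plus an $o(h^{1/2})$ error}, so the eigenvalues of $i\p\pbar K_p(h)$ are $\{p\,h^{p-1}, h^{p-1},\ldots,h^{p-1},0,\ldots,0\}$ only up to corrections of order $o(h^{p-1/2})$. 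Your binomial identity shows that $\sigma_m$ of the leading eigenvalues vanishes, placing the leading form exactly on $\partial\Gamma_m$; but then the $o(h^{1/2})$ correction dominates $\Hm(\psi)$, which is generically \emph{nonzero} on $\Omega\setminus V$ (and $\psi=K_p(h)$ need not even be $m$-sh there). Hence Theorem~\ref{Theorem 1} is not applicable for a general K\"ahler metric $\omega$.

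This is exactly why the paper works with the more flexible conditions (1) and (2) of Theorem~\ref{Real Theorem 1}, which tolerate an error of order $h^{\ti\gamma}$ with $\ti\gamma$ arbitrarily close to $\tfrac12$. Your computation, once reinterpreted, does give condition~(1): since the leading form lies on $\partial\Gamma_m$ with $\sigma_j>0$ for $j<m$, adding $c\,h^{1/2-\eta}\omega$ pushes it into $\Gamma_m$. But condition~(2) asks for an $m$-elliptic operator $\Delta$ with $\Delta K_p(h)\le c\,h^{\ti\gamma}K_p'(h)\,\Delta\rho$, and you do not address this at all. The naive choice $\Delta=(i\p\pbar K_p(h))^{m-1}\wedge\omega^{n-m}$ is not known to be $m$-elliptic, again because $K_p(h)$ is only borderline $m$-sh; the paper circumvents this by invoking Proposition~\ref{subsolution} to build a genuinely $m$-sh $\ti K = K_p(h)+\sum_j a_jK_{p+j\gamma}(h)+C\rho$ and then taking $\Delta=(i\p\pbar\ti K)^{m-1}\wedge\omega^{n-m}$. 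With $\ti\gamma$ near $\tfrac12$ and $\delta=1$, the threshold $p>\max\{1-\delta-\ti\gamma,-\tfrac{\delta}{2}\}$ still reduces to $p>-\tfrac12$, so your final translation to $k<\tfrac32 m$ survives---but only after the missing verification of~(2).
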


The ``threshold value" $m = \lfloor \frac{n}{2}\rfloor$ is common in the study of $m$-convex functions, so it seems reasonable to expect that $\psi_\theta$ and $\theta K_p(h)$ should have the same singularity type when $k< 2m$, instead of only when $k < \frac{3}{2}m$. We can show this under some more restrictive hypothesis on $\theta$ (essentially, if it is the pullback of a function on $V$); this allows us to effectively use the twisting of the eigenspaces of $i\p\pbar(\theta K_p(h))$ to pull out an extra factor of $h$ in the estimates. We plan to include details at a future date.

\medskip

A few words about the proof of Theorem \ref{Theorem 1}. The construction of the subsolutions simultaneously generalizes and simplifies a result from \cite{CM24}; there, it was only carried out in the setting of Corollary \ref{Corollary 1}. Our construction (and indeed, our definition) of supersolutions is novel -- we work exclusively with respect to the linearization of $\Hm$ at $\psi$, instead of with $\Hm$ directly. This allows us to construct supersolutions which are very poorly behaved on $S_\psi$, but whose asymptotics are clear. This is also why we are able to work with $\psi$ which are not necessarily $m$-sh.

We conclude with some comments on further consequences and directions. It follows more-or-less immediately from Theorem \ref{Theorem 1} that one can apply Demailly's Lelong-Jensen formula (see \cite{Dem_CADG_book}) to the envelope $\psi_\theta$ when $\theta$ has compact support; this provides, for the $m$-sh functions especially, a rich class of new sub-mean values inequalities. In order to apply these effectively, one needs a better understanding of the level sets of $\psi_\theta$ -- this is especially interesting for the functions coming from Corollary \ref{Corollary 1}. Understanding if Poisson kernels exist for these functions, c.f. \cite{BST21}, would also be interesting.

\medskip

The rest of this paper is organized as follows. In Section 2, we collect some background results and fix notation. In Section 3, we prove Theorem \ref{Real Theorem 1}, which implies Theorem \ref{Theorem 1}. Corollary \ref{Corollary 1} is shown in subsection \ref{Corollary_Section}.

\subsection*{Acknowledgments} We would like to thank L\'{a}szl\'{o} Lempert and Norm Levenberg for interesting discussions.


\section{Background}\label{Background}


We collect several definitions and background results we will need. From now on, we let $(\Omega^n, \omega)$ be a strongly $m$-pseudoconvex K\"ahler manifold, with smooth, non-empty boundary and fixed defining function $\rho$ (Def. \ref{mpcnx}); additionally, we assume that $\Omega$ is compactly contained in a larger K\"ahler manifold, and that $\omega$ extends smoothly as a K\"ahler metric to the larger space.

\subsection{Basics and Notation}

Fix an integer $1\leq m \leq n$. 
\begin{definition}
We say that a $(1,1)$-form $\alpha$ is {\bf $m$-positive} if $\alpha^k\wedge\omega^{n-k}\geq 0$ for all $1\leq k\leq m$. Similarly, we say $u\in C^2(\Omega)$ is {\bf $m$-subharmonic} if $i\p\pbar u$ is $m$-positive.

Following B\l ocki \cite{Bl05}, we say that a $(1,1)$-current $T$ is {\bf $m$-positive} if $T\wedge\alpha_2\wedge\ldots \wedge\alpha_m\wedge\omega^{n-m}\geq 0$ for all $m$-positive $\alpha_2,\ldots, \alpha_m$; we say that an upper-semicontinuous function $\vp$ is {\bf $m$-subharmonic} (abbrev. {\bf $m$-sh}) if $i\p\pbar \vp$ is $m$-positive, and write $\vp \in \mSH(\Omega)$ in this case. 
\end{definition}

By G\aa rding's inequality \cite{Gaar59, DL15}, the two definitions are consistent for $u\in\mSH(\Omega) \cap  C^2(\Omega)$. 

The restriction of an $m$-sh function to a proper complex hyperplane of dimension $k$ will, in general, only be $(m-k)$-sh, provided that $k > n- m$. This is significantly weaker than the restriction property for psh functions; further, it does not characterize $m$-subharmonicity, again, unlike the psh case (e.g. $\abs{z_1}^2 + \e\abs{z_2}^2 - \e\abs{z_3}^2$ on $\C^3$ with the standard metric is not $2$-sh).

Given $(1,1)$-currents $S$ and $T$, we write $T\geq_m S$ to mean that $T - S$ is $m$-positive. We say that $T$ is {\bf strictly $m$-positive} if $T \geq_m \e\omega$ for some $\e > 0$. Similarly, $\vp\in\mSH(\Omega)$ is {\bf strictly $m$-sh} if $i\p\pbar \vp$ is strictly $m$-positive.

\begin{definition}\label{mpcnx}
We say that a complex manifold with boundary $\Omega$ is {\bf strongly $m$-pseudoconvex} if there exists a defining function $\rho\in \mSH(\Omega)\cap C^\infty(\ov{\Omega})$, strongly $m$-sh, and such that $\p\Omega = \{\rho = 0\}$. 
\end{definition}


For $u\in\mSH(\Omega)\cap C^2(\Omega)$, we define the {\bf complex $m$-Hessian measure}:
\[
\Hm(u) := (i\p\pbar u)^m\wedge\omega^{n-m}.
\]
It is a positive Radon measure on $\Omega$, which is weakly continuous under monotone convergence; the obvious multilinear extension enjoys the same properties.

The largest space of $m$-sh functions to which $\Hm$ extends as a continuous function under decreasing sequences is the {\bf Cegrell class} \cite{Ceg84}, $\mathcal{E}_m(\Omega)$, sometimes also known as the {\bf domain of definition} of $\Hm$.

For $\vp\in\mathcal{E}_m(\Omega)$, the measure $\Hm(\vp)$ exists, and can be decomposed into a $m$-polar part $\Hm_s(\vp)$, and a non-$m$-polar part, $\Hm_r(\vp)$:
\[
\Hm(\vp) = \Hm_r(\vp) + \Hm_s(\vp);
\]
see e.g. \cite{ACLR24b}. The polar part $\Hm_s(\vp)$ will be supported on the {\bf polar set} of $\vp$, $S_\vp := \{\vp = -\infty\}$. We write $\mathcal{E}^a_m(\Omega)$ for those $\vp\in\mathcal{E}_m(\Omega)$ with $\Hm_s(\vp) = 0$.

Unlike the non-polar part, the polar part can often be read off from pointwise bounds on $\vp$.
\begin{definition}
Suppose that $\vp, \psi : \Omega\rightarrow \R\cup\{\pm\infty\}$ are continuous at each point in $S_\vp, S_\psi$, respectively.\footnote{This holds if $\vp,\psi$ are upper-semicontinuous, but we do not assume this.} We say that:
\begin{enumerate}

\item $\vp$ is {\bf more singular} than $\psi$, and write $[\psi]\leq [\vp]$, if for each $K\Subset\Omega$, there exists a constant $C_K$ such that $\vp \leq \psi + C_K$. We say that $\vp$ and $\psi$ have the same {\bf singularity type}, and write $[\vp] = [\psi]$, if $[\vp]\leq [\psi]$  and $[\psi]\leq[\vp]$.

\item $\vp$ is {\bf asymptotically more singular} than $\psi$, and write $\psi\preccurlyeq\vp$, if
\[
\limsup_{\substack{x\rightarrow z \\ x\not\in S_\psi}} \frac{\psi(x)}{\vp(x)} \leq 1\text{ for all }z\in S_\psi,
\]
were we adopt the convention that $\frac{c}{-\infty} = 0$ for any $c \in \R$. We say that $\vp$ and $\psi$ are {\bf asymptotic}, and write $\vp\asymp\psi$, if $\vp\preccurlyeq\psi$ and $\psi \preccurlyeq\vp$.

\item $\vp$ is {\bf essentially more singular} than $\psi$ if, for each open $U\Subset\Omega$, there exists $w\in\mathcal{E}^a_m(U)$ such that $\vp + w \leq \psi$ on $U$.
%

\end{enumerate}
\end{definition}

If $[\psi]\leq[\vp]$, then $\psi\preccurlyeq \vp$; if $\psi \preccurlyeq \vp$, then $\vp$ is essentially more singular than $\psi$ (argue as in \cite[Lemma 5.3 (1)]{ACLR24b}, for instance). 
All three conditions imply that $\Hm_s(\psi) \leq \Hm_s(\vp)$, as we will now show. First, a well-known result of Demailly \cite{Dem93} (see also \cite[Lemma 4.1]{ACCH09}) implies the following:
\begin{proposition}\label{polar comparison}
Suppose that $\vp,\psi\in\mathcal{E}_m(\Omega)$ and that $\theta\geq 0$ is a Borel measurable function, defined everywhere, which is bounded from above. If:
\[
\limsup_{\substack{x\rightarrow z \\ x\not\in S_\psi}} \frac{\psi(x)}{\vp(x)} \leq \theta(z) \text{ for all } z\in S_\psi, \text{ then }\Hm_s(\psi) \leq \theta^m\Hm_s(\vp).
\]
\end{proposition}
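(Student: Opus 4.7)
My plan is to reduce the variable-$\theta$ statement to Demailly's constant-exponent comparison of singular Hessian masses by stratifying along the level sets of $\theta$. As a preliminary observation, the limsup hypothesis forces $S_\psi \subseteq S_\vp$: if $\vp$ remained bounded near some $z \in S_\psi$, then $\psi/\vp$ would blow up, contradicting boundedness of $\theta(z)$. After subtracting constants I may also assume $\vp,\psi \leq -1$.

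Fix $\e > 0$ and let $M := \sup\theta < \infty$. Partition $S_\psi$ into the Borel pieces $A_k := \{z \in S_\psi : k\e \leq \theta(z) < (k+1)\e\}$, $0 \leq k \leq \lceil M/\e\rceil$. For each $z \in A_k$ the limsup hypothesis yields an open ball $B_z \subseteq \Omega$ on which $\psi(x)/\vp(x) \leq \theta(z) + \e$ for $x \in B_z \setminus S_\psi$; since $\vp < 0$, this rearranges to $\psi \geq (\theta(z) + \e)\vp \geq (k+2)\e\cdot\vp$ on $B_z \setminus S_\psi$, and extends trivially to all of $B_z$ (both sides being $-\infty$ on $S_\psi$). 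Setting $\widetilde{\psi}_z := \max(\psi,(k+2)\e\cdot\vp) \in \mathcal{E}_m(\Omega)$, which agrees with $\psi$ on $B_z$ and satisfies $\widetilde{\psi}_z \geq (k+2)\e\cdot\vp$ globally, the cited $m$-Hessian analogue of \cite[Lem.~4.1]{ACCH09} --- namely that $u \geq cv$ in $\mathcal{E}_m$ with $c > 0$ implies $\Hm_s(u) \leq c^m\Hm_s(v)$ --- then yields $\Hm_s(\psi)|_{B_z} = \Hm_s(\widetilde{\psi}_z)|_{B_z} \leq ((k+2)\e)^m\Hm_s(\vp)|_{B_z}$.

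A Lindelöf argument produces a countable subcover of $A_k$ by such balls; passing to the disjoint Borel refinement $\widetilde{B}_i := B_{z_i}\setminus\bigcup_{j<i}B_{z_j}$ and summing the local bounds yields
\[
\Hm_s(\psi)|_{A_k} \leq ((k+2)\e)^m\Hm_s(\vp)|_{A_k} \leq (\theta+2\e)^m\Hm_s(\vp)|_{A_k},
\]
using $\theta \geq k\e$ on $A_k$. Summing over $k$, and using that $\Hm_s(\psi)$ is concentrated on $S_\psi = \bigsqcup_k A_k$, gives $\Hm_s(\psi) \leq (\theta+2\e)^m\Hm_s(\vp)$. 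Letting $\e\downarrow 0$ and applying dominated convergence (valid since $\theta$ is bounded) completes the proof.

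The main subtlety is the localization step: Demailly's comparison is inherently a global statement, so one must replace $\psi$ by $\widetilde{\psi}_z$ to activate it on each ball $B_z$, relying on the intrinsic locality of the singular part of the Hessian measure. Once this trick is in place, everything else reduces to a routine Borel-stratification plus monotone-convergence package.
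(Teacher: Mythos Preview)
Your proof is correct and follows essentially the same route as the paper's: both localize via the limsup hypothesis to obtain $\psi \geq (\theta(z)+\e)\vp$ on small balls, invoke the constant-exponent comparison \cite[Lemma~4.1]{ACCH09}, stratify $\theta$ (you via explicit level sets $A_k$, the paper via approximation by simple functions from above), and pass to the limit $\e \to 0$. Your explicit globalization device $\widetilde{\psi}_z = \max(\psi, c\vp)$ is a careful touch that the paper leaves implicit when citing \cite{ACCH09} directly on $B(z)$.
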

\begin{proof}
Suppose that $E$ is a measurable set and $A \geq 0$ is such that $\theta \leq A$ on $E$. Fix $\e > 0$; then for any $z\in E\cap S_\psi$, there exists a ball $B(z)$ such that $\frac{\psi(x)}{\vp(x)}\leq \theta(z) + \e$ on $B(z)\setminus S_\psi$. It follows that $\psi \geq (\theta(z) + \e)\vp$ on $B(z)$, from basic properties of subharmonic functions, and so by \cite[Lemma 4.1]{ACCH09}, we have:
\[
\Hm_s(\psi) \leq (\theta(z)+\e)^m\Hm_s(\vp)\quad\quad\text{ on }B(z).
\]
Since this holds for any $z\in E\cap S_\psi$, we see that $\Hm_s(\psi) \leq (A+\e)^m\Hm_s(\vp)$ on $E$. Approximating $\theta$ by a decreasing sequence of simple functions, and subsequently letting $\e\rightarrow 0$, the result follows.
\end{proof}

\begin{proposition}
Suppose that $\vp, \psi\in\mathcal{E}_m(\Omega)$, and that $w\in\mathcal{E}^a_m(\Omega)$ is such that $\vp + w \leq \psi$. Then:
\[
\Hm_s(\psi) \leq \Hm_s(\vp).
\]
\end{proposition}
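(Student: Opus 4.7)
The plan is to reduce the desired inequality to Proposition \ref{polar comparison} by taking $u := \vp + w$ as the comparison function, and then to use the $\mathcal{E}^a_m$-membership of $w$ to identify $\Hm_s(u)$ with $\Hm_s(\vp)$ through a multilinear expansion.

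First I would note that $u = \vp + w$ lies in $\mathcal{E}_m(\Omega)$, since the Cegrell class is closed under addition. The hypothesis $\vp + w \leq \psi$ reads $u \leq \psi$, which forces $S_\psi \subset S_u$ (indeed $u(z) \leq \psi(z) = -\infty$ for $z \in S_\psi$) and gives $\psi(x)/u(x) \leq 1$ whenever $u(x) \in (-\infty, 0)$. Consequently
\[
\limsup_{\substack{x \to z \\ x \notin S_\psi}} \frac{\psi(x)}{u(x)} \leq 1 \qquad \text{for every } z \in S_\psi,
\]
so Proposition \ref{polar comparison} (with $\theta \equiv 1$) yields $\Hm_s(\psi) \leq \Hm_s(\vp + w)$.

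Next I would expand multilinearly,
\[
\Hm(\vp + w) = \sum_{k=0}^m \binom{m}{k} (i\p\pbar \vp)^k \wedge (i\p\pbar w)^{m-k} \wedge \omega^{n-m},
\]
and observe that for every $0 \leq k < m$ the summand contains at least one factor of $i\p\pbar w$. The key input is a standard ``plurifine locality'' result (cf.\ \cite{ACLR24b}): since $w \in \mathcal{E}^a_m$, any mixed Hessian measure with at least one factor of $i\p\pbar w$ puts no mass on any $m$-polar set. Taking polar parts of both sides, only the $k = m$ term survives, so $\Hm_s(\vp + w) = \Hm_s(\vp)$. Combining with the previous step gives $\Hm_s(\psi) \leq \Hm_s(\vp)$.

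The main obstacle will be the second step -- the absorption statement that mixed $m$-Hessian measures involving a factor in $\mathcal{E}^a_m$ carry no $m$-polar mass. This is a nontrivial fact from Cegrell-type pluripotential theory, but standard and citable. The remaining steps are routine applications of Proposition \ref{polar comparison} and of multilinearity.
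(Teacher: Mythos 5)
Your proof is correct and mirrors the paper's argument: apply Proposition \ref{polar comparison} (with $\theta\equiv 1$) to $u:=\vp+w\leq\psi$, then observe that the mixed terms in the multilinear expansion of $\Hm(\vp+w)$ carry no $m$-polar mass because $w\in\mathcal{E}^a_m(\Omega)$. The absorption statement you invoke as ``plurifine locality'' is precisely what the paper extracts from the mixed Hessian/H\"older inequality of \cite[Lemma 4.4]{ACCH09} applied on an $m$-polar set $A$, where $\int_A\Hm(w)=0$ annihilates every summand except $k=m$; that is the reference you want, rather than \cite{ACLR24b}.
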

\begin{proof}
For any Borel $m$-polar set $A$, we have that:
\[
\int_A (i\p\pbar (\vp + w))^m\wedge\omega^{n-m} \leq \sum_{k=0}^m {m \choose k}\left(\int_A \Hm(\vp)\right)^{\frac{k}{m}}\left(\int_A \Hm(w)\right)^{\frac{m-k}{m}} = \int_A \Hm(\vp),
\]
by \cite[Lemma 4.4]{ACCH09} (although stated for the psh case in \cite{ACCH09}, the proof can be copied exactly; the key results from \cite{Ceg04} which are used (Lemma 5.4 and Theorem 5.5) can be proved in the same way in the $m$-sh setting).  Since $\vp + w \leq \psi$, it follows from Proposition \ref{polar comparison} that $\Hm_s(\psi) \leq \Hm_s(\vp + w) = \Hm_s(\vp)$.
\end{proof}

\subsection{Supersolutions and Polar Mass}

We need some further definitions to work with envelopes of $m$-sh functions. The notion of ``subsolution" in our context is basically just being a candidate in the given envelope. The notion of supersolution is more subtle. We have a preliminary definition:

\begin{definition}\label{m-elliptic_definition}
Suppose that $m\geq 2$ and that $T$ is an $(n-1, n-1)$-current on an open subset $U\subset \Omega$ such that:
\begin{itemize}
\item there exist $u_2, \ldots, u_m\in\mSH(U)\cap C^\infty(U)$ with $T = i\p\pbar u_2\wedge\ldots \wedge i\p\pbar u_m\wedge\omega^{n-m}$, and
\item $\omega \wedge T > 0$ on $U$.
\end{itemize}
We then define an {\bf $m$-elliptic operator} on $U$ by:
\[
\Delta g := i\p\pbar g \wedge T\quad \quad \text{ for any }g \in \mSH^\delta(U),
\]
where $\mSH^\delta(U) := \{u - v\ |\ u, v\in\mSH(U)\}$. Note that $\Delta g$ is a signed Radon measure on $U$.
\end{definition}

%

\begin{definition}\label{superweight def}

We say that $\psi: \Omega \rightarrow \R\cup \{\pm\infty\}$ is a {\bf superweight} if there exists a closed, complete $m$-polar set $E$ such that:
\begin{enumerate}
\item $\{\psi \leq -1\}\Subset \Omega$,
\item $\exists\, u, v\in \mSH(\Omega\setminus E)\cap L^\infty_\loc(\Omega\setminus E)$ such that $\psi = u - v$ on $\Omega\setminus E$,
\item $\psi$ is continuous at each point in $(\Omega\setminus E)\cup S_\psi$, and
\item there exists an $m$-elliptic operator $\Delta$ on $\Omega\setminus E$ such that $\Delta \psi \leq 0$.
\end{enumerate}

\end{definition}

\noindent Condition (2) implies that $S_\psi \subseteq E$, but note that $S_\psi$ need not be closed.

If $\psi$ is a smooth $m$-sh function and $\Delta$ is an $m$-elliptic operator such that $\Delta \psi \leq 0$ on an open set $U$, then the G\aa rding inequality implies that $\Hm(\psi) \equiv 0$ on $U$; for more general $\psi$ and $\Delta$, we expect the same result to hold, although it seems one cannot directly apply the generalized G\aa rding inequality of Dinew-Lu \cite[Theorem 1]{DL15}.

\subsection{Extensions}

Next, we have a basic extension result, which we will use repeatedly in the next section:
\begin{proposition}\label{extension}

Suppose that $E\subset \Omega$ is a closed $m$-polar set. If $\vp\in \mSH(\Omega\setminus E)$, such that $\vp \leq 0$, then $\vp$ admits a unique extension across $E$, also denoted $\vp$, such that $\vp\in\mSH(\Omega)$.

\end{proposition}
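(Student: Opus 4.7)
I would prove the proposition by the standard recipe for extension across thin sets: extend $\vp$ to $E$ by the upper semicontinuous envelope of its boundary limits, and verify $m$-subharmonicity of the resulting function through a perturbation argument, reducing to the well-known extension-by-$-\infty$ result for $m$-sh functions.

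\textbf{Uniqueness and construction.} Any two $m$-sh extensions of $\vp$ agree on $\Omega \setminus E$, hence coincide everywhere, because an $m$-sh function is determined by its values off a closed $m$-polar set (being equal to its own usc regularization, which one can compute along sequences avoiding $E$). This both gives uniqueness and singles out the natural candidate,
\[
\tilde\vp(z) := \begin{cases} \vp(z) & z \in \Omega\setminus E,\\ \displaystyle\limsup_{w\to z,\, w\notin E}\vp(w) & z\in E.\end{cases}
\]
The hypothesis $\vp\leq 0$ ensures $\tilde\vp\leq 0$, and $\tilde\vp$ is upper semicontinuous on $\Omega$ by construction.

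\textbf{$m$-subharmonicity.} The remaining task is local. Fix $z_0\in E$ and a small neighborhood $U$. Since $E$ is $m$-polar, shrinking $U$ we may choose $u\in\mSH(U)$ with $u\leq 0$ and $E\cap U \subset \{u=-\infty\}$. For each $\e>0$ set
\[
\vp_\e := \tilde\vp + \e u \text{ on } U\setminus E, \qquad \vp_\e := -\infty \text{ on } E\cap U.
\]
Because $\tilde\vp$ is bounded above and $u\to-\infty$ along any sequence converging to a point of $E$, this extension is upper semicontinuous on $U$, and $\vp_\e\in\mSH(U\setminus E)$. The key intermediate claim is that $\vp_\e\in\mSH(U)$, i.e.\ the extension by $-\infty$ across the closed $m$-polar set $E$ is $m$-sh. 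This is the $m$-sh analogue of the classical El~Mir/Grauert--Remmert extension, established in the $m$-sh category by B\l{}ocki (and revisited by Lu); I would cite this rather than reprove it, the mechanism being that testing against a smooth $m$-positive form and introducing cutoffs $\chi_k\nearrow 1_{\,U\setminus E}$ produces boundary integrals that vanish in the limit because $E$ has zero $m$-capacity.

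\textbf{Passing to the limit.} Given $\vp_\e\in\mSH(U)$ for every $\e>0$, the family $\{\vp_\e\}$ increases to $\tilde\vp$ on $U\setminus E$ as $\e\searrow 0$. The upper semicontinuous regularization $\bigl(\sup_{\e>0}\vp_\e\bigr)^*$ is then an $m$-sh function on $U$ agreeing with $\vp$ on $U\setminus E$, so by the local form of the uniqueness argument it equals $\tilde\vp$. Hence $\tilde\vp\in\mSH(U)$ for each $z_0\in E$, and therefore $\tilde\vp\in\mSH(\Omega)$. The principal obstacle is the $m$-sh extension-by-$-\infty$ statement invoked above; once that ingredient is accepted, the rest is routine.
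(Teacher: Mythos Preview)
Your approach is correct and essentially matches the paper's: perturb $\vp$ by $\e u$ with $u\in\mSH$ having $E$ in its polar set, extend the perturbation by $-\infty$ across $E$, and then let $\e\searrow 0$. The one point of departure is that the step you blackbox---that the extension-by-$-\infty$ of $\vp_\e$ is $m$-sh---is exactly what the paper proves directly: it fixes an arbitrary linear operator $\Delta_\alpha = \alpha_2\wedge\cdots\wedge\alpha_m\wedge\omega^{n-m}$ with smooth $m$-positive $\alpha_i$, notes that the truncation $\max\{\vp_\e, s\}$ is identically $s$ on a neighborhood of $E$ (since $\e u < s$ there) and hence $\Delta_\alpha$-subharmonic on all of $\Omega$, and then lets $s\to -\infty$. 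This self-contained cutoff argument replaces your appeal to an El~Mir-type citation.
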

\begin{proof}

Let $\psi\in\mSH(\Omega)$ be such that $E \subseteq S_{\psi}$ and $\psi \leq 0$  \cite{GZ_book}; suppose also that $\alpha_2, \ldots, \alpha_m$ are smooth $m$-positive $(1,1)$-forms on $\Omega$. Let $\Delta_\alpha := \alpha_2\wedge\ldots\alpha_m\wedge\omega^{n-m}$. 

For any $\delta > 0$, we have $\vp + \delta\psi \in \mSH(\Omega\setminus E)$; by definition, $\vp + \delta\psi$ is $\Delta_\alpha$-subharmonic on $\Omega\setminus E$. Extend $\vp + \delta\psi$ by setting $\vp + \delta\psi \equiv -\infty$ on $E$ and write $\ti{\vp}_{\delta}$ for the extension. It will suffice to show that $\ti{\vp}_{\delta}$ is $\Delta_\alpha$-subharmonic. 

Consider the canonical cut-offs $\ti{\vp}_{\delta, s}:=\max\{\ti{\vp}_\delta, s\}$, $s \leq 0$. Since $\{\vp + \delta \psi < -s\}\supseteq \{\delta \psi < -s\}$, we have $\ti{\vp}_{\delta, s} \equiv s$ on a neighborhood of $E$; it follows that $\ti{\vp}_{\delta, s}$ is $\Delta_{\alpha}$-subharmonic on $\Omega$. Letting $s\rightarrow -\infty$, we conclude that $\ti{\vp}_\delta$ is a limit of $\Delta_\alpha$-subharmonic functions, and hence is $\Delta_\alpha$-subharmonic itself.
\end{proof}

\subsection{Three-Circle's Theorem and Corollaries}

We will also need a corollary to (a version of) the three-circles theorem for supersolutions. 

\begin{proposition}\label{three circles}
Suppose that $\psi$ is a supersolution. Then for any $\vp\in\mSH(\Omega)$, $\sup_\Omega \vp = -1$, the function $s\mapsto \max_{\{\psi \leq s\}} \vp$ is convex and increasing on $(-\infty, -1)$.
\end{proposition}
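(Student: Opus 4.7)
The plan is a classical three-circles argument, with $\psi$ playing the role of $\log|z|$ and the $m$-elliptic operator $\Delta$ from the superweight structure playing the role of the Laplacian. Monotonicity is immediate: for $s\leq s'$, $\{\psi\leq s\}\subseteq \{\psi\leq s'\}$ gives $M(s) := \max_{\{\psi\leq s\}}\vp \leq M(s')$, with the max attained on the compact set $\{\psi\leq s\}\subseteq\{\psi\leq -1\}\Subset\Omega$ by upper-semicontinuity of $\vp$.

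For convexity, fix $-\infty < s_1 < s_2 < -1$ and let $L(s) = as + b$ be the affine function with $L(s_i) = M(s_i)$; then $a\geq 0$ by monotonicity. On the relatively compact open set $A := \{s_1 < \psi < s_2\}$, I want to show $\vp \leq L\circ\psi = a\psi + b$. Using the $m$-elliptic operator $\Delta = i\p\pbar(\cdot)\wedge T$ on $\Omega\setminus E$ provided by the superweight structure, G\aa rding's inequality gives $\Delta\vp \geq 0$ (made rigorous by first replacing $\vp$ by the truncation $\vp_k := \max(\vp,-k)$ to invoke Bedford--Taylor theory, then letting $k\to\infty$), while $\Delta\psi \leq 0$ by definition of a superweight. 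Since $a \geq 0$:
\[
\Delta(\vp - a\psi - b) = \Delta \vp - a\,\Delta \psi \geq 0 \quad \text{on } A\setminus E.
\]
The boundary values are favorable: on $\p A\setminus E$, $\psi\in\{s_1, s_2\}$, so $\vp \leq M(\psi) = L\circ\psi$ by construction. To handle the exceptional set $E$, pick $\chi\in\mSH(\Omega)$, $\chi\leq 0$, with $E\subseteq \{\chi=-\infty\}$ (possible since $E$ is complete $m$-polar), and run the comparison with $\vp + \delta\chi$ in place of $\vp$ so that the boundary inequality is trivial near $E$. The maximum principle for $\Delta$-subharmonic functions gives $\vp + \delta\chi \leq a\psi + b$ on $A\setminus E$; letting $\delta\to 0^+$ yields $\vp\leq a\psi + b$ on $A\setminus E$.

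With this inequality, for any $s_0\in(s_1,s_2)$ and $x\in\{\psi\leq s_0\}\setminus E$: either $\psi(x)\leq s_1$, in which case $\vp(x)\leq M(s_1) \leq as_0 + b$ (using $a\geq 0$), or $x\in A$ and $\vp(x)\leq a\psi(x) + b \leq as_0 + b$. Taking the supremum (and using that, up to $m$-polar sets, this sup agrees with $M(s_0)$ via upper-semicontinuous regularization of $\vp$) yields $M(s_0)\leq as_0+b$; rewriting $as_0+b = \lambda M(s_1) + (1-\lambda) M(s_2)$ for $s_0 = \lambda s_1 + (1-\lambda) s_2$ is the desired convexity.

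The main obstacle is executing the maximum principle rigorously on $A\setminus E$ for general $m$-sh $\vp$: this splits into (a) defining $i\p\pbar\vp\wedge T$ when $\vp$ is unbounded, handled by Bedford--Taylor approximation via $\vp_k$, and (b) transferring information across the complete $m$-polar set $E$, handled by the $\delta\chi$ perturbation. The remainder is a textbook maximum-principle computation, directly analogous to the classical three-circles proof for subharmonic functions on an annulus.
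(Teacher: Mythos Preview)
Your approach is essentially the same as the paper's: a three-circles comparison of $\vp$ against the affine interpolant $a\psi+b$ on the shell $\{s_1<\psi<s_2\}$, with an auxiliary $m$-sh function blowing down on $E$ to control the exceptional part of the boundary. The paper streamlines two of your technical steps---it first reduces to smooth $\vp$ via Hartogs' lemma (so your truncation $\vp_k$ is unneeded; note also that $T$ is smooth by the definition of an $m$-elliptic operator, so $i\p\pbar\vp\wedge T$ is already a positive measure for any $m$-sh $\vp$, making the Bedford--Taylor detour superfluous), and it perturbs by $\e(u+\rho)$ rather than just $\delta\chi$, the extra $\e\rho$ giving a \emph{strict} subsolution so the maximum-principle step goes through without further justification.
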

\begin{proof}
Let $\Delta$ and $E$ be as in Definition \ref{superweight def}. By Hartog's Lemma \cite[Theorem 1.46]{GZ_book}, it suffices to assume that $\vp$ is smooth. 
Let $u\leq -1$ be an $m$-sh function on $\Omega$ with $S_u = E$ 
and set $M_s := \max_{\{\psi \leq s\}} \vp$ for $s\in (-\infty, -1]$. Fix $-\infty < s_1 < s_2 \leq -1$ and consider the open set:
\[
W = \{s_1 < \psi < s_2\}.
\]
Note that the boundary of $W$ in $\Omega$ consists of (up to) three distinct parts -- the sets $\{\psi = s_1\}$, $\{\psi =s_2\}$, and some (potentially empty) subset of $E\setminus S_\psi$ where we do not know that values of $\psi$.

Define the function $F(z) = \frac{s_2 - {\psi}(z)}{s_2 - s_1}M_{s_1} + \frac{{\psi}(z) - s_1}{s_2 - s_1}M_{s_2} = \frac{M_{s_2} - M_{s_1}}{s_2 - s_1}\psi(z) + \frac{s_2M_{s_1} - s_1M_{s_2}}{s_2 - s_1}$ on $W$. This satisfies $\Delta(F) = \frac{M_{s_2} - M_{s_1}}{s_2 - s_1}\Delta({\psi}) \leq 0$; note that $F > M_{s_1}$ on $W$, but this may not hold on the part of the boundary of $W$ in $E\setminus S_\psi$.

Consider the $m$-sh function $\vp_\e := \vp  + \e(u + \rho)$, $\e > 0$, which is strictly $m$-sh and less than $\vp$. By continuity of $u$ at $-\infty$, we see that there is some neighborhood $U$ of $E$ in $\Omega$ such that $\vp_\e \leq M_{s_1} - 1 < F$ on $\ov{U}$; we have then $F\geq \vp_\e$ on the boundary $W\cap (\Omega\setminus \ov{U})$.

Since $\Delta \vp_\e \geq \e \Delta \rho > 0\geq \Delta F$, the maximum principle 
implies that $\vp_\e \leq F$ on $W\cap (\Omega\setminus \ov{U})$, and hence also on $W$. Taking the limit $\e\rightarrow 0$ and then the max over $\{\psi \leq s\}$ (for $s_1 < s < s_2$) we see that $M_s$ is convex; it is obviously increasing. 
\end{proof}

\begin{corollary}\label{slopes}

Suppose that $\psi$ is a supersolution and $\vp\in\mSH(\Omega)$, $\sup_\Omega \vp = -1$; set $M_s(\vp) := \max_{\{\psi\leq s\}} \vp$ for $s\in(-\infty, -1)$. Then
\[
\frac{M_s(\vp) - M_t(\vp)}{s - t}
\]
is increasing in $s$ and $t$ (separately), where $-\infty < s < t < -1$. It follows that:
\[
\sigma^*_\psi(\vp) = \lim_{s\searrow-\infty} \frac{M_s(\vp)}{s} = \max\{\gamma \geq 0\ |\ \vp \leq \gamma\psi + C\}.
\]
\end{corollary}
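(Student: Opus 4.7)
The plan is to reduce everything to the convexity of $s \mapsto M_s(\vp)$ supplied by Proposition~\ref{three circles}, and then promote the asymptotic slope it determines into a global domination inequality.

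The monotonicity assertion is then purely classical: for $s_1 < s_2 < s_3$ in $(-\infty,-1)$, convexity of $s \mapsto M_s(\vp)$ gives the three-secant inequality
\[
\frac{M_{s_2}(\vp) - M_{s_1}(\vp)}{s_2 - s_1} \;\leq\; \frac{M_{s_3}(\vp) - M_{s_1}(\vp)}{s_3 - s_1} \;\leq\; \frac{M_{s_3}(\vp) - M_{s_2}(\vp)}{s_3 - s_2},
\]
which immediately yields monotonicity of $\frac{M_s(\vp) - M_t(\vp)}{s - t}$ in each of $s$ and $t$ separately. For the existence of $\sigma^*_\psi(\vp)$, I would fix any $t_0 \in (-\infty,-1)$ with $M_{t_0}(\vp) > -\infty$ and decompose
\[
\frac{M_s(\vp)}{s} \;=\; \frac{M_s(\vp) - M_{t_0}(\vp)}{s - t_0}\cdot\frac{s - t_0}{s} \;+\; \frac{M_{t_0}(\vp)}{s}.
\]
By the monotonicity just established, the first slope decreases as $s \searrow -\infty$ to a limit $\sigma \in [0,\infty)$ (nonnegative because $M_s(\vp) \leq -1 \leq M_{t_0}(\vp)$ while $s - t_0 < 0$, and finite because convexity bounds the slope above by the right derivative of $s \mapsto M_s(\vp)$ at $t_0$). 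Since the remaining factors tend to $1$ and $0$, this gives $\sigma^*_\psi(\vp) = \sigma$.

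Finally, for the identification of $\sigma$ with $\max\{\gamma \geq 0 \mid \vp \leq \gamma\psi + C\}$, one direction is immediate: if $\vp \leq \gamma\psi + C$, then taking the max over $\{\psi \leq s\}$ gives $M_s(\vp) \leq \gamma s + C$, and dividing by $s < 0$ and letting $s \to -\infty$ yields $\sigma \geq \gamma$. The reverse direction — exhibiting a constant $C$ witnessing $\vp \leq \sigma\psi + C$ on all of $\Omega$ — is where I expect the main obstacle to be. From the monotone slope estimate, $M_s(\vp) \leq \sigma s + C'$ for all $s \leq t_0$ with $C' := M_{t_0}(\vp) - \sigma t_0$, so evaluating at $s = \psi(x)$ and using $\vp(x) \leq M_{\psi(x)}(\vp)$ gives the desired inequality on $\{\psi \leq t_0\}$. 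To extend this to all of $\Omega$, I would absorb into a larger constant on the complement $\{\psi > t_0\}$ using the bounds $\vp \leq -1$ and $\psi \geq t_0$ (the set $\{\psi \leq t_0\}$ being compactly contained by the supersolution hypothesis). The one potentially delicate case, $\sigma > 0$ with $\psi(x) = -\infty$, is automatic: $M_s(\vp)/s \to \sigma > 0$ forces $M_s(\vp) \to -\infty$, and hence $\vp \equiv -\infty$ on $S_\psi$, so the inequality is vacuously true there.
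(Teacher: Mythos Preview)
Your argument is correct and is exactly the standard convexity reasoning the paper has in mind; the paper gives no proof of this corollary, treating both the slope monotonicity and the identification of $\sigma^*_\psi(\vp)$ as immediate from the convexity in Proposition~\ref{three circles}. One small slip: in justifying nonnegativity of the slope you write ``$M_s(\vp) \leq -1 \leq M_{t_0}(\vp)$,'' but in fact $M_{t_0}(\vp) \leq \sup_\Omega \vp = -1$; the correct (and simpler) reason is just that $M$ is increasing, so $M_s(\vp) \leq M_{t_0}(\vp)$ while $s - t_0 < 0$.
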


The following is due to Rashkovskii-Sigurdsson \cite[Lemma 4.1]{RS15}, and is the corollary that we will need later:

\begin{corollary}\label{RS_Cor}

Suppose that $\psi$ is a supersolution, and $\vp\in\mSH(\Omega)$ is such that $[\psi]\leq [\vp]$ and $\sup_\Omega \vp \leq -1$. Then $\vp \leq \psi$.

\end{corollary}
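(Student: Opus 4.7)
The plan is to derive Corollary \ref{RS_Cor} as a direct application of Corollary \ref{slopes}; the hypothesis $[\psi] \leq [\vp]$ is converted into a lower bound on the asymptotic slope $\sigma^*_\psi(\vp)$, and the normalization $\sup_\Omega \vp \leq -1$ is then used to pin down the additive constant. First I would reduce to $\sup_\Omega \vp = -1$ by adding a non-negative constant to $\vp$ (this only makes the desired conclusion stronger, not weaker). The set $K := \overline{\{\psi \leq -1\}}$ is compactly contained in $\Omega$ by Definition \ref{superweight def}(1), and the hypothesis $[\psi]\leq[\vp]$ supplies a constant $C$ with $\vp \leq \psi + C$ on $K$. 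Since $\{\psi \leq s\} \subseteq K$ for every $s \leq -1$, this yields $M_s(\vp) \leq s + C$ on that range, and dividing by $s < 0$ and letting $s \to -\infty$ gives $\sigma^*_\psi(\vp) \geq 1$.

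Plugging this back into Corollary \ref{slopes}, for every $-\infty < s < t < -1$ the slope $(M_s(\vp) - M_t(\vp))/(s-t)$ is bounded below by $\sigma^*_\psi(\vp) \geq 1$. Because $s - t < 0$, this rewrites as $M_s(\vp) - s \leq M_t(\vp) - t$; in other words, $t \mapsto M_t(\vp) - t$ is non-decreasing on $(-\infty, -1)$. Passing to the limit $t \nearrow -1$ and using $M_t(\vp) \leq \sup_\Omega \vp = -1$ yields $M_s(\vp) \leq s$ for every $s \in (-\infty, -1)$.

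To finish, at any $z \in \Omega$ with $\psi(z) < -1$ the point lies in $\{\psi \leq \psi(z)\}$, so $\vp(z) \leq M_{\psi(z)}(\vp) \leq \psi(z)$; for $z$ with $\psi(z) \geq -1$, one has $\vp(z) \leq -1 \leq \psi(z)$ directly. Either way $\vp \leq \psi$, as required.

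I do not foresee a genuine obstacle here: the heavy lifting is already done by the three-circles convexity of Proposition \ref{three circles} and the slope monotonicity packaged in Corollary \ref{slopes}. The only delicate point is the opening estimate $\sigma^*_\psi(\vp) \geq 1$, where one needs the sublevel sets $\{\psi \leq s\}$ with $s \leq -1$ to be relatively compact in $\Omega$ — which is precisely what the supersolution condition (1) provides.
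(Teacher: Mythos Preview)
Your proposal is correct and follows essentially the same route as the paper's proof: both use $[\psi]\leq[\vp]$ to obtain $\sigma^*_\psi(\vp)\geq 1$, invoke the slope monotonicity of Corollary~\ref{slopes} to deduce $M_s(\vp)\leq s$ for $s<-1$, and finish by splitting into $\{\psi<-1\}$ and $\{\psi\geq-1\}$. The only cosmetic differences are your explicit normalization to $\sup_\Omega\vp=-1$ and your limit $t\nearrow -1$ in place of the paper's direct evaluation at $t=-1$.
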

\begin{proof}

Since $[\psi]\leq [\vp]$, we have $\sigma^*_\psi(\vp) \geq 1$. Corollary \ref{slopes} then tells us that:
\[
1 \leq \sigma_{\psi}^*(\vp) \leq \frac{M_{s}(\vp) - M_{-1}(\vp)}{s + 1}\text{ for all }s\in (-\infty, -1).
\]
It follows that, for each $s\in(-\infty, -1)$, we have $\psi(z) = s \geq M_s(\vp) - M_{-1}(\vp) - 1 \geq \vp(z)$ for all $z\in \{\psi = s\}$. In particular, we conclude that $\psi(z)\geq \vp(z)$ for all $z\in \{\psi \leq -1\}$. We conclude, as $\psi \geq \vp$ on $\{\psi > -1\}$.
\end{proof}


\section{Main Results}\label{Main Results}


In this section, we prove our main results, Theorem \ref{Theorem 1} and Corollary \ref{Corollary 1}. Theorem \ref{Theorem 1} will follow from combining the subsolutions constructed in subsection \ref{Subsolution_Section} with the supersolutions constructed in subsection \ref{Supersolution_Section}; this is done in Theorem \ref{Real Theorem 1}. Corollary \ref{Corollary 1} is shown in subsection \ref{Corollary_Section}.

As mentioned in the Introduction, we will not work with $\psi$ directly, but rather with a bounded $m$-sh function $h$ which has an $m$-polar minimum set; if it happens that $K_p(h)\in \mathcal{E}_m(\Omega)$, then we (essentially) recover the set-up in the Introduction. Working with $h$ directly however is much more flexible.

For the rest of the section, we fix constants $\e, \delta > 0$ and assume that $h$ satisfies the following assumptions:
\begin{itemize}
\item $h\in \mSH(\Omega)$, $0\leq h\leq 1$,
\item $\{h = 0\}$ is non-empty and $m$-polar,
\item $h\in C^2(\{h > 0\})$, and
\item $i\p\pbar h \geq_m \e h^{1-\delta}\omega$ on $\{h > 0\}$.
\end{itemize}


\subsection{Subsolution}\label{Subsolution_Section}


We start with a lemma:
\begin{lemma}\label{p lemma}
Suppose that $p< 1$ is such that:
\begin{equation}\label{growth condition}
i\p\pbar K_p(h) \geq_m -ch^{\ti{\gamma}}K_p'(h)\omega = -ch^{p-1 + \ti{\gamma}}\omega\quad  \text{ on }\{ h > 0\},
\end{equation}
for some $c > 0$ and $\ti{\gamma} > 1- \delta$. Then for each $p < q \leq 1$, there exists some $C > 0$ such that $K_q(h) + C\rho\in \mSH(\Omega)$.
\end{lemma}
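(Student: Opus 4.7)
The plan is to compute $i\p\pbar K_q(h)$ explicitly on $\{h > 0\}$, use the growth condition \eqref{growth condition} to eliminate the (negative) pure gradient term, and then absorb the resulting lower bound using the strict $m$-positivity of $\rho$, finally extending across the $m$-polar set $\{h = 0\}$ via Proposition \ref{extension}.

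First, I would note the elementary identity, valid on $\{h > 0\}$ for any real $q$,
\[
i\p\pbar K_q(h) = h^{q-1} i\p\pbar h + (q-1) h^{q-2} i\p h \wedge \pbar h,
\]
(the $q = 0$ case being the limit $\log$-formula), and write the same identity for $K_p$. Combined with the hypothesis $i\p\pbar K_p(h) \geq_m -c h^{p-1+\ti\gamma}\omega$, I can solve for the rank-one gradient term: since $p < 1$, dividing the inequality by $(p - 1) < 0$ flips the direction of $m$-positivity and gives
\[
h^{-1}\, i\p h \wedge \pbar h \leq_m \frac{1}{1-p}\bigl(c h^{\ti\gamma}\omega + i\p\pbar h\bigr).
\]
Feeding this back into the formula for $i\p\pbar K_q(h)$ and using $q - 1 \leq 0$ to flip again, a short calculation yields
\[
i\p\pbar K_q(h) \geq_m \tfrac{q-p}{1-p}\, h^{q-1} i\p\pbar h - \tfrac{(1-q)c}{1-p}\, h^{q-1+\ti\gamma}\omega.
\]
Finally, applying the standing assumption $i\p\pbar h \geq_m \e h^{1-\delta}\omega$ to the first term produces a lower bound of the form
\[
i\p\pbar K_q(h) \geq_m \bigl[A\, h^{q-\delta} - B\, h^{q-1+\ti\gamma}\bigr]\omega
\]
with $A > 0$ and $B \geq 0$.

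The critical observation is that $\ti\gamma > 1 - \delta$ implies $q - 1 + \ti\gamma > q - \delta$, so factoring out $h^{q-\delta}$ gives $A - B\, h^{\ti\gamma + \delta - 1}$, and since $\ti\gamma + \delta - 1 > 0$, this is non-negative for $h$ close to $0$. On the complementary region $\{h \geq h_0\}$ everything is bounded, so the bracket is bounded below by some $-M \leq 0$ globally on $\{h > 0\}$. Choosing $C \geq M/\e_0$, where $i\p\pbar \rho \geq_m \e_0 \omega$, shows that $K_q(h) + C\rho$ is $m$-sh on the open set $\{h > 0\}$.

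It remains to extend across the closed $m$-polar set $\{h = 0\}$. For $q \leq 0$, both $K_q(h)$ and $\rho$ are $\leq 0$ on $\Omega$, so Proposition \ref{extension} applies directly. For $0 < q \leq 1$, $K_q(h)$ is bounded and continuous (tending to $0$ at $\{h=0\}$); subtracting a constant to make the function non-positive and then applying Proposition \ref{extension}, the uniqueness of the extension forces agreement with the original continuous definition on $\Omega$. The main obstacle is purely algebraic: carefully tracking the sign flips in the $m$-positivity inequalities when dividing by $p-1$ and multiplying by $q-1$, and checking that the hypothesis $\ti\gamma > 1 - \delta$ is exactly what is needed for the negative gradient-type contribution to be sub-leading, so that it can be absorbed into a finite multiple of $i\p\pbar\rho$.
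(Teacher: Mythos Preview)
Your proof is correct and follows essentially the same approach as the paper's. The only cosmetic difference is algebraic: the paper writes $i\p\pbar K_q(h)$ directly as the positive linear combination
\[
i\p\pbar K_q(h) = \frac{K_q''(h)}{K_p''(h)}\, i\p\pbar K_p(h) + \left(K_q'(h) - \frac{K_q''(h)}{K_p''(h)} K_p'(h)\right) i\p\pbar h,
\]
whereas you first isolate $h^{-1}i\p h\wedge\pbar h$ from the hypothesis and then substitute; both routes produce the identical lower bound $\tfrac{q-p}{1-p}h^{q-1}i\p\pbar h - \tfrac{(1-q)c}{1-p}h^{q-1+\ti\gamma}\omega$, after which the argument (positivity near $\{h=0\}$ from $\ti\gamma>1-\delta$, boundedness elsewhere, absorption by $C\rho$, extension via Proposition~\ref{extension}) is the same.
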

\begin{proof}
On $\{h > 0\}$, we have:
\begin{align}
i\p\pbar K_q(h) &= K_q'(h) i\p\pbar h + K''_q(h)i\p h\wedge\pbar h \notag\\
& = \frac{K_q''(h)}{K_p''(h)} i\p\pbar K_p(h) + \left(K'_q(h) - \frac{K_q''(h)}{K_p''(h)} K_p'(h)\right)i\p\pbar h \notag\\
&\geq_m -c\frac{1-q}{1-p}h^{\ti{\gamma}} K_q'(h)\omega + \e\frac{q-p}{1-p}K_q'(h)h^{1-\delta}\omega. \label{lower K bound}
\end{align}
We see that $K_q(h)$ is $m$-sh on $\{h^{\ti{\gamma} + \delta - 1} < \frac{\e (q-p)}{c(1-q)}\}$ -- by  Proposition \ref{extension}, $K_q(h) + C\rho$ is $m$-sh on $\Omega$ for $q > p$ and $C$ sufficiently large.
\end{proof}

\begin{proposition}\label{subsolution}
Fix $p \leq 0$, and suppose that $h, c,$ and $\ti{\gamma}$ are as in Lemma \ref{p lemma}. Fix $0 < \gamma < \min\{\ti{\gamma} +\delta - 1, \tfrac{\delta}{2}\}$, and set $\ell$ to be the smallest integer such that $p + \ell \gamma > 0$; define $q_j := p + j \gamma$ for each $j = 1, \ldots, \ell$.

Let $0 \leq \theta\leq 1$ and $0\leq \theta_j\leq 1$, $j = 1,\ldots, \ell$, be smooth functions with:
\begin{itemize}
\item $\theta_0 := \theta$ and $\theta_\ell = 1$, 
\item $\Supp(\theta_{j-1})\subset \{\theta_j = 1\}^\circ$ for each $j$.
\end{itemize}
Let also $a_j  >0$, $j = 1, \ldots, \ell$, be positive constants. Then:
\[
\theta K_p(h) +  \sum_{j=1}^\ell a_j \theta_j K_{q_j}(h) + C \rho\ \  \in \mSH(\Omega)
\]
for all $C$ sufficiently large.
\end{proposition}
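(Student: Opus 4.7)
The plan is to work on the open set $\{h>0\}$ where every function involved is $C^2$, establish a uniform lower bound $i\p\pbar(\text{sum})\geq_m -C_0\omega$, and then absorb $C_0\omega$ by adding $C\rho$ with $C\geq C_0/\e_\rho$ (since $\rho$ is strictly $m$-sh). The closed $m$-polar set $\{h=0\}$ is then crossed via Proposition \ref{extension} to produce the desired $m$-sh function on $\Omega$.

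The central structural input is the nested cutoff condition. Setting $V_k:=\{\theta_k=1\}^\circ$ with the convention $V_0:=\emptyset$, the hypothesis $\Supp\theta_{j-1}\subset V_j$ forces $V_1\subset V_2\subset\cdots\subset V_\ell=\Omega$, and at any $x$ with $k:=\min\{j : x\in V_j\}$ only the single cutoff $\theta_{k-1}$ has nonzero derivatives near $x$: for $i<k-1$, nesting gives $x\notin\Supp\theta_i$, while for $i\geq k$, $\theta_i\equiv 1$ in a neighborhood. The Leibniz rule therefore reduces $i\p\pbar$ of the sum at $x$ to
\[
\sum_{i=k}^\ell a_i\,i\p\pbar K_{q_i}(h) + a_{k-1}\bigl[\theta_{k-1}\,i\p\pbar K_{q_{k-1}}(h) + K_{q_{k-1}}(h)\,i\p\pbar\theta_{k-1} + X_{k-1}\bigr]
\]
(with the conventions $\theta_0:=\theta$, $q_0:=p$, $a_0:=1$, and $X_{k-1}:=i\p\theta_{k-1}\wedge\pbar K_{q_{k-1}}(h)+$ c.c.).

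I then compare positive and negative contributions term by term. Repeating the computation in Lemma \ref{p lemma}'s proof gives $i\p\pbar K_{q_i}(h)\geq_m K_{q_i}'(h)\bigl[\tfrac{\e(q_i-p)}{1-p}h^{1-\delta} - c\tfrac{1-q_i}{1-p}h^{\ti\gamma}\bigr]\omega$, and since $\ti\gamma>1-\delta$ the positive piece dominates for small $h$, producing a positive $\omega$-contribution of order $h^{q_i-\delta}$. The bad term $K_{q_{k-1}}(h)\,i\p\pbar\theta_{k-1}$ is of size $h^{q_{k-1}}$ (or $|\log h|$), dominated by the positive $h^{q_{k-1}-\delta}$ mass since $\delta>0$. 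For the cross term I apply AM-GM
\[
X_{k-1}\geq -r\,i\p K_{q_{k-1}}(h)\wedge\pbar K_{q_{k-1}}(h) - \tfrac{1}{r}\,i\p\theta_{k-1}\wedge\pbar\theta_{k-1},
\]
with $r$ to be scaled as a power of $h$; the crucial input is that the Lemma \ref{p lemma} hypothesis unwinds to the $m$-sense upper bound $i\p h\wedge\pbar h\leq_m \tfrac{h}{1-p}\,i\p\pbar h + \tfrac{c}{1-p}\,h^{1+\ti\gamma}\omega$, so that the $i\p K_{q_{k-1}}\wedge\pbar K_{q_{k-1}} = h^{2q_{k-1}-2}\,i\p h\wedge\pbar h$ contribution can be traded for additional $i\p\pbar h$ mass (absorbed by the positive $\sum_i a_i h^{q_i-1}\,i\p\pbar h$ already present) plus an extra $\omega$ piece of more favorable order.

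The hard part is the bookkeeping near the boundary of $\Supp\theta_{k-1}$, where $\theta_{k-1}$ may be arbitrarily small while $|\nabla\theta_{k-1}|$ is bounded away from zero; there the $\theta_{k-1}$-weighted main positive term does not suffice to beat the cross. The remedy is to borrow positive mass from the $a_k\,i\p\pbar K_{q_k}(h)$ contribution (available because $\theta_k\equiv 1$ in a neighborhood of $x$): the exponent gap $q_k-q_{k-1}=\gamma$, combined with the inequalities $\gamma<\delta/2$ and $\ti\gamma>1-\delta$, provides just enough slack for the positives to dominate uniformly as $h\to 0$ once $r$ is tuned correctly as a power of $h$. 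On $\{h\geq h_0>0\}$ all quantities are smooth and the lower bound is manifest. Assembling everything yields the uniform estimate $i\p\pbar(\text{sum})\geq_m -C_0\omega$ on $\{h>0\}$, after which adding $C\rho$ and invoking Proposition \ref{extension} concludes.
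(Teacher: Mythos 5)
Your proposal is correct, and the overall strategy matches the paper's: reduce via the nested supports to a single ``active'' cutoff $\theta_{k-1}$ at each point, balance the bad cutoff terms against the positivity of the next-level term $a_k K_{q_k}(h)$, and close the exponent bookkeeping using $\gamma < \min\{\ti{\gamma}+\delta-1, \delta/2\}$ before crossing $\{h=0\}$ with Proposition~\ref{extension}.

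The one substantive divergence is in how the cross term is absorbed. You introduce a free parameter $r$ via AM--GM and then trade the resulting $-r\, i\p K_{q_{k-1}}\wedge\pbar K_{q_{k-1}}$ back into $i\p\pbar h$- and $\omega$-mass using the unwound inequality $i\p h\wedge\pbar h \leq_m \tfrac{h}{1-p}\,i\p\pbar h + \tfrac{c}{1-p}h^{1+\ti\gamma}\omega$; this requires further unwinding of the $i\p h\wedge\pbar h$ pieces hidden inside $a_k\, i\p\pbar K_{q_k}(h)$ to produce net $i\p\pbar h$-mass of the right size, and the correct choice turns out to be $r\sim h^{\gamma - q_{k-1}}$. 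The paper instead writes $aK_{q+\gamma} = \tfrac{a}{2}K_{q+\gamma} + \tfrac{a}{2}\chi(K_q)$ with $\chi = K_{q+\gamma}\circ L_q$ convex, so the chain rule produces the positive gradient form $\tfrac{a}{2}\chi''(K_q)\,i\p K_q\wedge\pbar K_q$ \emph{inside} the term being differentiated; completing the square with $i\p\theta\wedge\pbar K_q + i\p K_q\wedge\pbar\theta$ then leaves only $-\tfrac{C}{a\chi''(K_q)}\,i\p\theta\wedge\pbar\theta$, and $\chi''(K_q) = \gamma h^{\gamma - q}$ supplies your value of $r$ automatically, with no need for the unwound bound or a separate parameter. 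Both mechanisms lead to the same exponent comparisons ($h^{q+\gamma-\delta}$ versus $h^{q-1+\ti\gamma}$, $h^{q-\gamma}$, and $\abs{K_q(h)}$), so yours would close just as well; the paper's packaging is simply tighter. If you do write yours up fully, make the choice $r\sim h^{\gamma - q_{k-1}}$ explicit, and be careful that the ``positive $i\p\pbar h$-mass already present'' really means the net coefficient $\tfrac{q_k-p}{1-p}h^{q_k-1}$ obtained after unwinding the negative $K_{q_k}''$ piece as well.
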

\begin{proof}

If we define $q_0 := p$ and $a_0 := 1$, then by Lemma \ref{p lemma}, it will suffice to show that $a_{j-1} \theta_{j-1} K_{q_{j-1}}(h) + a_{j} K_{q_{j-1}+\gamma}(h) + C\rho$ is $m$-sh on $\{\theta_{j} = 1\}$; since the sum is finite, we can conclude by increasing $C$ at the end. We thus set $a_{j-1} = 1$ and drop the remaining indices, without loss of generality.

Recall that we are writing $L_q(t)$ for the inverse of $K_q(s)$ on $\{t < 0\}$. Define the convex increasing function $\chi(t) := K_{q+\gamma}(L_q(t))$; we have $\chi'(t) = L_q(t)^\gamma$, $\chi''(t) = \gamma L_q(t)^{\gamma - q}$, and $K_{q+\gamma}(h) = \chi(K_q(h))$.

If we choose $C$ sufficiently large, then on $\{h > 0\}$, by completing the square and using \eqref{lower K bound}, we have:
\begin{align*}
i\p\pbar (\theta K_q(h) + a K_{q+\gamma}(h)) &=  \theta i\p\pbar K_q + i\p K_q \wedge \pbar \theta + i\p\theta\wedge\pbar K_q + K_q i\p\pbar \theta \\
& + \frac{a}{2}i\p\pbar K_{q+\gamma} + \frac{a}{2}\chi'(K_q)i\p\pbar K_q + \frac{a}{2}\chi''(K_q)i\p K_q \wedge\pbar K_q\\
&\geq_m \frac{a}{2}i\p\pbar K_{q+\gamma} + \left(\theta + \frac{a}{2}\chi'(K_q)\right) \left(- c\frac{1-q}{1-p}h^{\ti{\gamma}} \right)K_q' \omega\\
& - \frac{4}{a \chi''(K_q)}i\p\theta\wedge\pbar\theta + K_q i\p\pbar \theta.
\end{align*}
From \eqref{lower K bound} again, we have $i\p\pbar K_{q+\gamma} \geq_m \e' K_{q+\gamma}'\, h^{1-\delta}\omega - C\omega$, for some small $\e' > 0$, so that:
\[
i\p\pbar (\theta K_q + a K_{q+\gamma}) \geq_m \left(\frac{a\e'}{2} h^{q + \gamma - \delta} - C h^{q - 1 + \ti{\gamma}} - C h^{q - \gamma} - C \abs{K_q(h)} - C\right)\omega.
\]
Since $\gamma < \min\{\ti{\gamma} - 1 + \delta, \frac{\delta}{2}\}$, we have that $h^{q + \gamma - \delta} > C\max\{h^{q - 1 + \ti{\gamma}}, h^{q-\gamma}\}$ when $h$ is sufficiently small, so that $\theta K_q + a K_{q+\gamma} + C\rho$ is $m$-sh on $\{h > 0\}$; we conclude by Proposition \ref{extension}.
\end{proof}

\begin{remark}\label{bounded_remark}
In Proposition \ref{subsolution}, if $p > \max\{1-\delta-\ti{\gamma}, -\frac{\delta}{2}\}$, then we can take $\gamma > 0$ such that $q_1 = p + \gamma > 0$, so that $K_{q_1}(h)$ is bounded.
\end{remark}


\subsection{Supersolution}\label{Supersolution_Section}


We now construct the supersolution we need; the construction is similar to the previous one.
\begin{proposition}\label{supersolution}
Suppose that $p\leq 0$ is such that:
\begin{itemize}
\item for each $p < q \leq 1$, there exists $C_q \geq 0$ such that  $K_q(h) + C_q\rho\in\mSH(\Omega)$, and
\item there exists an $m$-elliptic linear operator $\Delta$ such that:
\[
\ \ \quad\quad\quad\quad \Delta K_p(h) \leq ch^{\ti{\gamma}}K_p'(h)\Delta\rho\quad\quad\text{ on }\{h > 0\},
\]
for some $c > 0$ and $\ti{\gamma} > 1-\delta$.
\end{itemize}
Then for any $0 < \gamma < \min\{\ti{\gamma} - 1 + \delta, \tfrac{\delta}{2}\}$ and any $a > 0$, there exists a $C > 0$ such that:
\[
\Delta (\theta K_p(h) - a K_{p + \gamma}(h) - C \rho) \leq 0 \quad \text{ on }\{h > 0\}.
\]
\end{proposition}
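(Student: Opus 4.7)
The plan is to compute $\Delta(\theta K_p(h) - a K_{p+\gamma}(h))$ by direct expansion, isolate a strong negative term coming from the second derivative of $K_{p+\gamma}$, and compare exponents to show it dominates all remaining contributions. The engine is the chain-rule identity
\[
\Delta K_{p+\gamma}(h) \;=\; h^\gamma\, \Delta K_p(h) \;+\; \gamma\, h^{p+\gamma-2}\, i\p h \wedge \pbar h \wedge T,
\]
which comes from writing $K_{p+\gamma}(h) = \chi(K_p(h))$ with $\chi'(K_p(h)) = h^\gamma$ and $\chi''(K_p(h)) = \gamma h^{\gamma-p}$. Combined with Leibniz on $\theta K_p(h)$, this lets me write
\begin{align*}
\Delta\bigl(\theta K_p(h) - a K_{p+\gamma}(h)\bigr)
  = {}& (\theta - a h^\gamma)\Delta K_p(h) - a\gamma\, h^{p+\gamma-2}\, i\p h \wedge \pbar h \wedge T \\
  & {}+ K_p(h)\Delta\theta + h^{p-1}(i\p h \wedge \pbar\theta + i\p\theta \wedge \pbar h)\wedge T,
\end{align*}
exposing the principal negative term $-a\gamma h^{p+\gamma-2}\, i\p h \wedge \pbar h\wedge T$.

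Next I would bound the remaining pieces. The hypothesis and $\theta \in [0,1]$ give $\theta \Delta K_p(h) \leq c h^{\ti\gamma + p - 1}\Delta\rho$. For $-a h^\gamma \Delta K_p(h)$, I would expand $\Delta K_p(h) = h^{p-1}\Delta h - (1-p)h^{p-2}\, i\p h \wedge \pbar h \wedge T$ and use the inequality $\Delta h \geq \e h^{1-\delta}\omega \wedge T$ (from wedging $i\p\pbar h \geq_m \e h^{1-\delta}\omega$ with the positive form $T$) to produce the second key negative contribution $-a\e h^{p+\gamma-\delta}\omega\wedge T$. Complex Cauchy--Schwarz on the cross terms, weighted by $\eta = a\gamma h^{\gamma - 1}/2$, is absorbed half by the principal negative term, contributing a leftover $C h^{p-\gamma}\Delta\rho$. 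The auxiliary lower bound
\[
i\p h \wedge \pbar h \wedge T \;\geq\; \tfrac{\e}{2(1-p)}\, h^{2-\delta}\,\omega \wedge T
\]
for $h$ small, derived by combining the formula for $\Delta K_p(h)$, the hypothesis $\Delta K_p(h) \leq c h^{\ti\gamma + p - 1}\Delta\rho$, and $\ti\gamma > 1 - \delta$, then converts the principal negative term into $-c' h^{p+\gamma-\delta}\omega \wedge T$. The strict inequalities $\gamma < \ti\gamma + \delta - 1$ and $\gamma < \delta/2$ ensure that $h^{p+\gamma-\delta}$ dominates the positive $h^{\ti\gamma + p - 1}$, $h^{p-\gamma}$, and $h^p$ (the last from $|K_p(h)\Delta\theta|$) contributions as $h \to 0$, so the expression is $\leq 0$ on $\{0 < h < h_0\}$ for some small $h_0$. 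On $\{h \geq h_0\}$ everything is uniformly bounded, and the $-C\rho$ term with $C$ large absorbs the remainder.

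The main technical obstacle is the spurious positive contribution $a(1-p)h^{p+\gamma-2}\, i\p h \wedge \pbar h \wedge T$ generated by naively expanding $-a h^\gamma \Delta K_p(h)$: only the fraction $a\gamma$ cancels automatically against the principal negative term, and no a priori upper bound on $i\p h \wedge \pbar h \wedge T$ is available. The resolution, which is specific to the coefficients of $-a K_{p+\gamma}(h)$, is to avoid that split on the sub-region $\{\theta \geq a h^\gamma\}$ and instead substitute $h^{p-1}\Delta h \leq c h^{\ti\gamma + p - 1}\Delta\rho + (1-p) h^{p-2}\, i\p h \wedge \pbar h \wedge T$ directly into $(\theta - ah^\gamma)h^{p-1}\Delta h$; the three coefficients on $h^{p-2}\, i\p h \wedge \pbar h \wedge T$ (namely $(\theta - a h^\gamma)(1-p)$ from the substitution, $-\theta(1-p) + a(1-p-\gamma)h^\gamma$ from the original expansion, plus the $-a\gamma h^\gamma$ from the identity) then telescope to exactly $-a\gamma h^\gamma$, making the problematic term disappear. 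On the complementary sub-region $\{\theta < a h^\gamma\}$, $(\theta - a h^\gamma) h^{p-1}\Delta h$ is bounded above by $\e(\theta - ah^\gamma)h^{p-\delta}\omega \wedge T$ using $\Delta h \geq \e h^{1-\delta}\omega \wedge T$, yielding a good negative term of the same scale, and the analogous bookkeeping works out.
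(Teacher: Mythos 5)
Your approach diverges from the paper's at the very first step, and the divergence creates a gap that your region-splitting fix does not close.

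The paper decomposes $K_{p+\gamma}(h) = \chi(K_q(h))$ with $\chi = K_{p+\gamma}\circ L_q$ and an \emph{intermediate} exponent $q$ satisfying $p < q < \min\{p+\gamma, 1\}$. The point of introducing $q$ is precisely to invoke the first hypothesis, $K_q(h) + C_q\rho \in \mSH(\Omega)$, which gives the crucial lower bound $i\p\pbar K_q \geq_m -C_q\, i\p\pbar\rho$. After wedging with $T$ this controls the $\chi'(K_q)\Delta K_q$ contribution from below, and the remaining term $\chi''(K_q)\, i\p K_q\wedge\pbar K_q\wedge T$ has the correct sign and is the reservoir against which Cauchy--Schwarz is performed. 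Notice that your proposal never uses this first hypothesis at all; that should already be a warning sign, because nothing else in the data provides a two-sided bound on $\Delta K_p$.

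Your substitution $K_{p+\gamma}(h) = \chi(K_p(h))$ (i.e. taking $q = p$) leaves you with the piece $(\theta - ah^\gamma)\Delta K_p$, and when $\theta < ah^\gamma$ you need a \emph{lower} bound for $\Delta K_p$ to estimate this from above. You correctly observe that expanding $\Delta K_p = h^{p-1}\Delta h - (1-p)h^{p-2}\, i\p h\wedge\pbar h\wedge T$ produces the spurious term $a(1-p)h^{p+\gamma-2}\, i\p h\wedge\pbar h\wedge T$, only $a\gamma$ of whose coefficient is cancelled by the principal negative term. Your telescoping argument on $\{\theta \geq ah^\gamma\}$ is correct (I checked it: after substituting $h^{p-1}\Delta h \leq ch^{\ti\gamma+p-1}\Delta\rho + (1-p)h^{p-2}\, i\p h\wedge\pbar h\wedge T$ into $(\theta-ah^\gamma)h^{p-1}\Delta h$, the coefficient of $h^{p-2}\, i\p h\wedge\pbar h\wedge T$ collapses to $-a\gamma h^\gamma$). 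But on $\{\theta < ah^\gamma\}$, you cannot play the same trick: there $(\theta - ah^\gamma) < 0$, so the substitution goes the wrong way, and what remains is the positive term $(ah^\gamma - \theta)(1-p)h^{p-2}\, i\p h\wedge\pbar h\wedge T$; near $\{\theta = 0\}$ its coefficient is of size $a(1-p-\gamma)h^\gamma$, which is not absorbed by $-a\gamma h^\gamma$. Your auxiliary inequality $i\p h\wedge\pbar h\wedge T \gtrsim h^{2-\delta}\,\omega\wedge T$ is a lower bound and only strengthens the good negative term; to kill the spurious positive term you would need an \emph{upper} bound on $i\p h\wedge\pbar h\wedge T$, which the hypotheses do not furnish (and which, even when available, e.g. $i\p h\wedge\pbar h \lesssim h\,\omega$ in the distance-squared example, still gives exponent $p+\gamma-1$, not dominated by the good term $h^{p+\gamma-\delta}$ unless $\delta > 1$). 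So the assertion that ``the analogous bookkeeping works out'' on $\{\theta < ah^\gamma\}$ is unjustified, and I do not see how to repair it without introducing the intermediate exponent $q$ as the paper does.
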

\begin{proof}

Choose $q$ such that $p < q < \min\{p+\gamma, 1\}$, and similar to before, define two convex increasing functions $\chi = K_{p+\gamma}\circ L_q$ and $\ti{\chi} = K_q\circ L_p$. By completing the square and using the quasi-$m$-subharmonicity of $K_q$:
\begin{align*}
i\p\pbar (\theta K_p(h) - a K_{p+\gamma}(h)) &=  \theta i\p\pbar K_p + i\p K_p \wedge \pbar \theta + i\p\theta\wedge\pbar K_p + K_p i\p\pbar \theta \\
&- \frac{a}{2}i\p\pbar K_{p+\gamma} - \frac{a}{2}\chi'(K_q)i\p\pbar K_q - \frac{a}{2}\chi''(K_q)\ti{\chi}'(K_p)^2i\p K_p \wedge\pbar K_p\\
&\leq_m -\frac{a}{2}i\p\pbar K_{p+\gamma}  + \theta i\p\pbar K_p+ \frac{a}{2}C_q \chi'(K_q)\omega\\
&+ h^{p-\gamma} i\p\theta \wedge \pbar\theta  + C \abs{K_p} \omega.
\end{align*}
We can replace the $\omega$'s above with $i\p\pbar \rho$, by the strict $m$-subharmonicity of $\rho$. Then, similar to Lemma \ref{p lemma}, the quasi-$m$-subharmonicity of $K_q$ gives us the lower bound $i\p\pbar K_{p+\gamma} \geq_m -C_q\frac{1-p+\gamma}{1-q} h^{p+\gamma - q}i\p\pbar \rho + \frac{p+\gamma - q}{1-q}h^{p+\gamma-1}i\p\pbar h$. By $m$-ellipticity, we have $\Delta h \geq \e h^{1-\delta}\Delta \rho$, so that:
\[
\Delta K_{p+\gamma} \geq (\e' h^{p+\gamma - \delta} - C h^{p+\gamma - q})\Delta \rho,
\]
for some $\e' > 0$. It follows then that:
\[
\Delta(\theta K_p - aK_{p+\gamma}) \leq [-\e' h^{p+\gamma - \delta} + C (h^{p-1+\ti{\gamma}} + h^{p+\gamma - q} + h^{p-\gamma} + \abs{K_p})]\Delta \rho.
\]
By our choices of $q$ and $\gamma$, this is bounded above by $C\Delta \rho$.
\end{proof}


\subsection{Proof of the Theorem}\label{Main_Theorem_Section}


Theorem \ref{Theorem 1} will now follow from:
\begin{theorem}\label{Real Theorem 1}
Suppose that $h \in \mSH(\Omega)$ satisfies the conditions stated at the start of this section. Additionally, suppose that there exists $p\leq 0$, $c > 0, \ti{\gamma} > 1- \delta,$ and an $m$-elliptic operator $\Delta$ such that:
\begin{enumerate}
\item $i\p\pbar K_p(h) \geq_m -ch^{\ti{\gamma}}K_p'(h)\omega$ on $\{h > 0\}$ and
\item $\Delta K_p(h) \leq c h^{\ti{\gamma}}K_p'(h)\Delta\rho$ on $\{h > 0\}$.
\end{enumerate}
Set $\psi := K_p(h)$. Then for any non-negative $\theta\in C^2(\ov{\Omega})$, the envelope:
\begin{equation}\label{envelope for theorem}
\psi_\theta := \sup\{ \vp\in\mSH(\Omega)\ |\ \vp\leq \min\{0, \theta\psi + C\}\text{ for some } C\geq 0\}^*
\end{equation}
satisfies:
\begin{enumerate}[(i)]
\item $\Hm(\psi_\theta) = \chi_{S_\psi}\theta^m\Hm(\psi)$,
\item $\psi_\theta \geq (\max_\Omega \theta) \psi - C$,
\item $\psi_\theta \in L^\infty_\loc(\Omega\setminus \ov{\{\theta > 0\}})$, and
\item $\displaystyle \lim_{\substack{x\rightarrow z \\ x\not\in S_\psi}} \frac{\psi_\theta}{\theta\psi} = 1$ for all $z\in S_\psi\cap \{\theta > 0\}$.
\end{enumerate}
If $p > \max\{1-\delta - \ti{\gamma}, -\frac{\delta}{2}\}$, then $\psi_\theta$ and $\theta\psi$ have the same singularity type. 
\end{theorem}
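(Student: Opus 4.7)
The plan is to sandwich $\psi_\theta$ between a subsolution from Proposition~\ref{subsolution} and a supersolution from Proposition~\ref{supersolution} and then read off the four conclusions from the sandwich. The hypotheses on $h$ and $p$ in Theorem~\ref{Real Theorem 1} are precisely what both propositions require, so for any $\gamma \in (0, \min\{\ti{\gamma} - 1 + \delta, \tfrac{\delta}{2}\})$ we obtain a subsolution $f \in \mSH(\Omega)$ from Proposition~\ref{subsolution} and an associated $g := \theta K_p(h) - aK_{p+\gamma}(h) - C\rho$ with $\Delta g \leq 0$ on $\{h > 0\}$ from Proposition~\ref{supersolution}. Observe that $f - \theta\psi = \sum_j a_j\theta_j K_{q_j}(h) + C\rho$ is bounded above (each $K_{q_j}\leq 0$ for $q_j \leq 0$, $K_{q_\ell}$ is bounded since $q_\ell > 0$, and $\rho \leq 0$), so after subtracting a large constant, $f$ is a valid competitor in~\eqref{envelope for theorem} and $\psi_\theta \geq f$. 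Similarly $\theta\psi - g = aK_{p+\gamma}(h) + C\rho$ is bounded above (either $K_{p+\gamma}\leq 0$, or it is bounded when $p+\gamma > 0$), so $[g] \leq [\theta\psi]$; this singularity-type inequality is what will let $g$ control $\psi_\theta$ from above.

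Conclusion (ii) follows by applying Proposition~\ref{subsolution} with $\theta$ replaced by the constant $\max_\Omega\theta$ and all $\theta_j \equiv 1$; the resulting subsolution gives $\psi_\theta \geq (\max_\Omega\theta)\psi - C$. For (iii), fix a compact $K \Subset \Omega \setminus \ov{\{\theta > 0\}}$ and choose the cutoffs $\theta_1, \ldots, \theta_{\ell-1}$ inductively so that each is supported in an open neighborhood of $\ov{\{\theta > 0\}}$ disjoint from $K$, while respecting $\Supp(\theta_{j-1}) \subset \{\theta_j = 1\}^\circ$. On $K$ the subsolution then collapses to $K_{q_\ell}(h) + C\rho$, which is bounded, so $\psi_\theta \geq -M$ on $K$, and together with $\psi_\theta \leq 0$ this gives the $L^\infty_\loc$ conclusion.

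For (iv), fix $z \in S_\psi \cap \{\theta > 0\}$. Choosing cutoffs with all $\theta_j \equiv 1$ near $z$, the subsolution equals $\theta\psi + \sum_j a_j K_{q_j}(h) + C\rho - C'$ near $z$; since $q_j > p$ forces $K_{q_j}(h)/K_p(h) \to 0$ as $h \to 0$ and $\rho/\psi \to 0$, we obtain $f/(\theta\psi) \to 1$. Because $\psi_\theta \geq f$ and $\theta\psi < 0$, dividing gives $\limsup_{x\to z}\psi_\theta/(\theta\psi) \leq 1$. For the reverse, I verify that $g$ satisfies Definition~\ref{superweight def} with $E = S_\psi$: item (4) is precisely Proposition~\ref{supersolution}, items (2) and (3) follow from the $m$-sh representatives supplied by Lemma~\ref{p lemma}, and (1) is arranged by enlarging $C$. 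Each candidate $\vp$ for~\eqref{envelope for theorem} satisfies $[\theta\psi] \leq [\vp]$, so combined with $[g] \leq [\theta\psi]$ we obtain $[g] \leq [\vp]$; Corollary~\ref{RS_Cor} (after normalising $\sup\vp \leq -1$) then yields $\vp \leq g + C''$ with $C''$ uniform in $\vp$. Taking the upper semicontinuous envelope gives $\psi_\theta \leq g + C''$, and dividing by $\theta\psi$ using $g/(\theta\psi) \to 1$ produces $\liminf_{x\to z}\psi_\theta/(\theta\psi) \geq 1$, completing (iv).

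For (i), the upper bound $\Hm_s(\psi_\theta) \leq \theta^m\Hm_s(\psi)$ on $\{\theta > 0\}\cap S_\psi$ is immediate from (iv) and Proposition~\ref{polar comparison}; vanishing off $\{\theta > 0\}$ follows from the boundedness in (iii). The reverse polar inequality and the vanishing of $\Hm_r(\psi_\theta)$ I expect to obtain by the standard balayage/orthogonality principle for envelopes, exploiting the two-sided control $f \leq \psi_\theta \leq g + C$. For the singularity-type addendum under $p > \max\{1-\delta-\ti{\gamma}, -\tfrac{\delta}{2}\}$, Remark~\ref{bounded_remark} lets us take $\ell = 1$ and $q_1 > 0$, so that both $K_{q_1}(h)$ and $K_{p+\gamma}(h)$ are bounded; then $f$ and $g$ each differ from $\theta\psi$ by bounded amounts, giving $\theta\psi - C \leq \psi_\theta \leq \theta\psi + C$ and hence $[\psi_\theta] = [\theta\psi]$. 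The step I expect to be hardest is the measure-theoretic analysis in (i) — the reverse polar inequality and the vanishing of the non-polar part of $\Hm(\psi_\theta)$ — which will require adapting balayage-for-envelope arguments to the Cegrell class $\mathcal{E}_m$ in a setting where $\psi$ itself may not be $m$-subharmonic.
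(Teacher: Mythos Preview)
Your strategy matches the paper's exactly: sandwich $\psi_\theta$ between the subsolution of Proposition~\ref{subsolution} and the supersolution of Proposition~\ref{supersolution} via Corollary~\ref{RS_Cor}, and read off (ii)--(iv) and the singularity-type addendum from the two bounds; your verification that $g$ is a superweight is more explicit than the paper's, which simply invokes Corollary~\ref{RS_Cor} without comment.

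Regarding your flagged difficulty in (i): the paper's own proof is equally terse here, asserting only that the remaining parts ``follow immediately'' from the supersolution bound, so this is not a gap in your argument relative to the paper. Concretely, $\Hm_r(\psi_\theta)=0$ on $\Omega\setminus S_\psi$ follows from a one-line balayage (on any ball $B\subset\Omega\setminus S_\psi$, replacing a competitor $\vp$ by the maximal $m$-sh function with the same values on $\partial B$ yields a larger competitor, since $\psi$ is bounded on $B$ and the singularity constraint is unaffected), and the reverse polar inequality $\Hm_s(\psi_\theta)\geq\theta^m\Hm_s(\psi)$ follows from (iv) by applying Proposition~\ref{polar comparison} with the roles of $\psi_\theta$ and $\psi$ exchanged---note that this last step tacitly requires $\psi\in\mathcal{E}_m(\Omega)$ for $\Hm(\psi)$ to be defined, an assumption implicit already in the statement of (i).
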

\begin{proof}
The conditions $(i)$-$(iv)$ will follow readily from the propositions above. First, condition $(ii)$ is obvious. After perhaps multiplying $\theta$ by a smooth cut-off which is $\equiv 1$ on a neighborhood of $S_\psi$, we may choose functions $\theta_j$ and constants $a_j$ as in Proposition \ref{subsolution}. The resulting subsolution, $\ul{\psi}_\theta$, will be a competitor in the envelope \eqref{envelope for theorem}, after perhaps subtracting a constant. Letting the supports of the $\theta_j$ shrink to the support of $\theta$ establishes $(iii)$.

We also see that $\psi_\theta \preccurlyeq \theta\psi$ on $\{\theta > 0\}$, since $\lim_{x\rightarrow S_\psi} \frac{\ul{\psi}_\theta}{\psi} = \theta$; Proposition \ref{polar comparison} then implies $\Hm(\psi_\theta) \leq \chi_{S_\psi}\theta^m\Hm(\psi)$.

Now, for any $a > 0$, we can produce a supersolution $\ov{\psi}_\theta$ by using Proposition \ref{supersolution}. By Corollary \ref{RS_Cor}, we have $\psi_\theta \leq \ov{\psi}_\theta$, from which the rest of the conditions follow immediately. The last statement follows from Remark \ref{bounded_remark} (for Theorem \ref{Theorem 1}, note that if $\psi\in\mathcal{E}_m(\Omega)$ with $\Hm(\psi) = 0$ on $\{\psi > -\infty\}$, then we can take $\Delta = (i\p\pbar \psi)^{m-1}\wedge\omega^{n-m}$ and $\ti{\gamma}$ arbitrarily large).
%
%
%
\end{proof}
%


\subsection{Proof of the Corollary}\label{Corollary_Section}

We now show Corollary \ref{Corollary 	1}:
\begin{corollary}\label{Real Corollary 1}
Suppose that $V\subset \Omega$ is a complex submanifold, with $\p V =\emptyset$ or $\p V\subset \p\Omega$, and complex codimension $k \geq m$. Set $h := d_{\omega, V}^2$, the square of the Riemannian distance function to $V$ induced by $\omega$, and $p := 1 - \frac{k}{m}\leq 0$. Then for any $\theta\in C^2(\ov{\Omega})$, $\theta\geq 0$, the envelope:
\[
\psi_\theta := \sup\{ \vp\in\mSH(\Omega)\ |\ \vp\leq \min\{0, \theta K_p(h) + C\}\text{ for some } C\geq 0\}^*
\]
satisfies the conclusions $(i)$-$(iv)$ in Theorem \ref{Real Theorem 1}. Moreover, if $k < \frac{3}{2}m$, then $\psi_\theta$ and $\theta K_p(h)$ have the same singularity type. 
\end{corollary}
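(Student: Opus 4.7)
The plan is to verify the hypotheses of Theorem \ref{Real Theorem 1} for $h := d_{\omega, V}^2$ and $p := 1 - k/m$, and then observe that the condition $p > -\delta/2$ translates into $k < \tfrac{3}{2}m$. First, after smoothing $d_{\omega,V}^2$ past the cut locus of $V$ to produce a globally $m$-sh $h\in C^\infty(\Omega\setminus V)$ with $\{h = 0\} = V$ and $0 \leq h \leq 1$, I would check the basic assumptions stated at the start of Section \ref{Main Results}. The zero set $V$ is pluripolar (hence $m$-polar), as any complex submanifold of positive codimension. For the condition $i\p\pbar h \geq_m \e\omega$ (so $\delta = 1$), in holomorphic Fermi coordinates $(z', z'')$ around a point of $V$ one has $h = |z'|^2 + O(|z'|^3)$, so $i\p\pbar h$ has $k \geq m$ eigenvalues close to $1$ in the normal directions and $O(h)$ in the $n-k$ tangential directions, giving $m$-positivity of $i\p\pbar h - \e\omega$ for small $\e$.

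The heart of the proof is the eigenvalue analysis of $i\p\pbar K_p(h)$. In the Euclidean model with $h = |z'|^2$, it has eigenvalues $h^{p-1}$ with multiplicity $k-1$ (angular normal), $p h^{p-1}$ with multiplicity $1$ (radial normal), and $0$ with multiplicity $n-k$ (tangential). A direct calculation gives
\[
\sigma_m(i\p\pbar K_p(h)) = h^{m(p-1)}\left[\binom{k-1}{m} + p\binom{k-1}{m-1}\right],
\]
which vanishes precisely for $p = 1 - k/m$, while the analogous expression with $m$ replaced by $j < m$ is strictly positive. Adding $c h^{\tilde\gamma}K_p'(h)\omega$ lifts the tangential eigenvalues to positive order $c h^{p-1+\tilde\gamma}$, producing $\sigma_m \geq c' h^{m(p-1)+\tilde\gamma}$; curvature corrections from the actual K\"ahler metric contribute only $O(h^{m(p-1)+1})$-size perturbations. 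This verifies condition (1) of Theorem \ref{Real Theorem 1} for any $\tilde\gamma < 1$.

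For condition (2) I would take the $m$-elliptic operator $\Delta g := i\p\pbar g \wedge T$ with $T := (i\p\pbar K_p(h))^{m-1}\wedge\omega^{n-m}$ on $\{h > 0\}$; $m$-ellipticity is immediate from positivity of $\sigma_{m-1}$ in the same analysis. Then $\Delta K_p(h) = (i\p\pbar K_p(h))^m\wedge\omega^{n-m}$ vanishes in the model and is $O(h^{m(p-1)+1})\omega^n$ after corrections, while $\Delta \rho = i\p\pbar\rho \wedge T$ is bounded below by a multiple of $h^{(m-1)(p-1)}\omega^n$ using strict $m$-subharmonicity of $\rho$. Hence the right-hand side $c h^{\tilde\gamma}K_p'(h)\Delta\rho$ is of order $h^{m(p-1)+\tilde\gamma}$, which dominates the left-hand side as soon as $\tilde\gamma \leq 1$. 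With (1) and (2) verified for some $\tilde\gamma\in(\tfrac{1}{2}, 1)$, Theorem \ref{Real Theorem 1} delivers conclusions $(i)$-$(iv)$; the same-singularity-type statement requires $p > \max\{1-\delta-\tilde\gamma,\, -\delta/2\} = -\tfrac{1}{2}$, equivalently $1 - k/m > -\tfrac{1}{2}$, i.e.\ $k < \tfrac{3}{2}m$.

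The main technical obstacle is making the Fermi coordinate expansion above fully rigorous, in particular verifying that the geometric correction to $i\p\pbar K_p(h)$ is truly $O(h)$ relative to the leading $h^{p-1}$ order, which is what allows $\tilde\gamma > 1/2$. A secondary issue is constructing the global, globally $m$-sh smoothing of $d_{\omega,V}^2$ beyond the injectivity radius of the normal exponential map; this should be doable by a convex composition in the range variable, along the lines of \cite{CM24}.
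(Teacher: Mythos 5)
Your overall strategy mirrors the paper's — verify the hypotheses of Theorem \ref{Real Theorem 1} for $h = d_{\omega, V}^2$ with $\delta = 1$, compute in Fermi/geodesic cylindrical coordinates, and observe that $p > -\delta/2$ becomes $k < \tfrac{3}{2}m$ — but there is a genuine gap in your choice of $m$-elliptic operator for condition (2).

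You take $T := (i\p\pbar K_p(h))^{m-1}\wedge\omega^{n-m}$, but this is not admissible under Definition \ref{m-elliptic_definition}, which requires $T = i\p\pbar u_2\wedge\cdots\wedge i\p\pbar u_m\wedge\omega^{n-m}$ for genuinely $m$-sh $u_j$. Your own eigenvalue computation shows why this fails: in the Euclidean model, $\sigma_m(i\p\pbar K_p(h)) = 0$ exactly for $p = 1 - k/m$, so $K_p(h)$ sits on the boundary of the $m$-sh cone, and the curvature corrections can (and generically do) push it to the wrong side. Without $K_p(h) \in \mSH$, the lower bound $\Delta\rho \geq c\, h^{(m-1)(p-1)}\omega^n$ that you invoke is unjustified, and the maximum-principle machinery behind Proposition \ref{three circles} and Corollary \ref{RS_Cor} is unavailable. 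The paper circumvents this precisely by first invoking Proposition \ref{subsolution} to build $\ti{K} := K_p(h) + \sum_{j=1}^\ell a_j K_{p+j\gamma}(h) + C\rho$, which \emph{is} $m$-sh by construction, and then takes $\Delta = (i\p\pbar\ti{K})^{m-1}\wedge\omega^{n-m}$; the upper bound on $\Delta K_p(h)$ then requires the additional estimate of the cross term $i\p\pbar K_{p+\gamma}\wedge(i\p\pbar K_p)^{m-1}\wedge\omega^{n-m}$ appearing when $\ti{K}$ replaces $K_p(h)$. You need this extra step.

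A secondary issue: you claim the geometric corrections to $i\p\pbar h$ are $O(h)$ in the tangential block, allowing any $\ti{\gamma} < 1$. For a submanifold $V$ with non-trivial second fundamental form, the correction generically contains terms of order $h^{1/2}$; the paper only establishes $o(h^{1/2})$, yielding $\ti{\gamma}$ arbitrarily close to (but below) $\tfrac{1}{2}$. Fortunately, since the singularity-type threshold in Theorem \ref{Real Theorem 1} reads $p > \max\{1 - \delta - \ti{\gamma},\ -\delta/2\}$ and $-\delta/2 = -\tfrac{1}{2}$ dominates either way, the final conclusion $k < \tfrac{3}{2}m$ survives — but you should not rely on the $O(h)$ estimate. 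Finally, your concern about smoothing past the cut locus is easily sidestepped: the paper simply shrinks $\Omega$ so that $h$ is smooth.
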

\begin{proof}
By shrinking $\Omega$ if necessary, we can assume that $h$ is smooth on $\Omega$. Then, by computing in geodesic cylindrical coordinates, one can show that $i\p\pbar h$ is approximately equal to the Euclidean distance to a codimension $k$ hyperplane in $\C^n$ \cite{TY12}; it follows that $h$ is strictly $k$-sh (and consequently, it is strictly $m$-sh for any $m\leq k$). 

More precisely, choose coordinates along a geodesic normal to $V$ such that the first coordinate is in the radial direction, the last $n-k$ coordinates are (parallel transports of directions) tangent to $V$, and the remaining coordinates are normal to $V$. Then we have:
\[
\frac{1}{h} i\p h \wedge \pbar h = \begin{spm} 1 & 0 \\ 0 & 0 \end{spm} \quad\text{ and }\quad i\p\pbar h = \begin{spm} \mathrm{Id}_{k} & 0 \\ 0 & 0 \end{spm} + o(h^{1/2}),
\]
where we write $o(h^{1/2})$ for a term such that $\tfrac{o(h^{1/2})}{h^{1/2}}\rightarrow 0$ uniformly on compacts as $h\rightarrow 0$. It follows that $h$ satisfies the conditions listed at the start of the section with $\delta = 1$. 

We now show that conditions (1) and (2) from Theorem \ref{Real Theorem 1} hold when $\ti{\gamma}$ is arbitrarily close to $\tfrac{1}{2}$. We have:
\begin{align*}
\frac{1}{K_p(h)'} i\p\pbar K_p(h) &= i\p\pbar h + \frac{p-1}{h}i\p h\wedge\pbar h\\
&\geq \begin{spm} 1 - \tfrac{k}{m} & 0 & 0\\ 0 & \mathrm{Id}_{k-1} & 0 \\ 0 & 0 & 0 \end{spm} + o(h^{1/2}).
\end{align*}
The leading order term is $m$-sh, by direct computation, so that $i\p\pbar K_p(h) \geq_m -c h^{\frac{1}{2}-\frac{\eta}{2}}K_p'(h)\omega$ for any $\eta, c > 0$ sufficiently small. This shows (1).

To see (2), start by using Proposition \ref{subsolution} to choose $a_j, C > 0$ and $\gamma = \frac{1}{2}-\eta$ such that $\ti{K} := K_p(h) + \sum_{j=1}^\ell a_j K_{p + j\gamma}(h) + C\rho$ is $m$-sh. A short computation in cylindrical coordinates shows that, on $\{h > 0\}$:
\begin{align*}
\Hm(K_p(h)) &= K_p'(h)^{m}\left(i\p\pbar h + \frac{m(p-1)}{h}i\p h\wedge\pbar h\right)\wedge (i\p\pbar h)^{m-1}\wedge\omega^{n-m}\\
&\leq K_p'(h)^{m} o(h^{1/2})\omega^n,
\end{align*}
after throwing away the leading order term. It follows that:
\begin{align*}
i\p\pbar K_p(h)\wedge (i\p\pbar \ti{K})^{m-1}\wedge\omega^{n-m} &\leq \Hm(K_p) + C \abs{i\p\pbar K_{p+\gamma}\wedge (i\p\pbar K_p)^{m-1}\wedge\omega^{n-m}}\\
&\leq K_p'(h)^{m-1}[K_p'(h) o(h^{1/2}) + CK_{p+\gamma}'(h)]\omega^n\\
&\leq C h^{\frac{1}{2}-\eta}K_p'(h) (i\p\pbar \rho)\wedge (i\p\pbar \ti{K})^{m-1}\wedge\omega^{n-m}.
\end{align*}
Condition (2) is thus satisfied for $\Delta = (i\p\pbar \ti{K})^{m-1}\wedge\omega^{n-m}$ and $\ti{\gamma} = \frac{1}{2}-\eta$.
\end{proof}


\end{document}